\theoremstyle{definition}
\theoremstyle{plain}
\newtheorem{lemma}{Lemma}[section]
\newtheorem*{lemma*}{Lemma}
\newtheorem{theorem}{Theorem}[section]
\numberwithin{equation}{section}
\newcommand{\rightlimit}[1]{\mathop{\lim\limits_{\longrightarrow}}\limits%
                   _{\raise3pt\hbox{$\scriptstyle #1$}}}
\title{The action of the Cremona group on the non-commutative ring.}
\begin{document}
\author{Alexandr Usnich}
\maketitle
\begin{abstract}
 The Cremona group acts on the field of two independent commutative variables over complex numbers.
 We provide a non-commutative ring that is an analog of non-commutative field of two independent variables
 and prove that the Cremona group embeds in the group of outer automorphisms of this ring. First proof of this
 result is technical, the second one is conceptual and proceeds through a construction of a birational invariant
 of an algebraic variety from it's bounded derived categories of coherent
 sheaves.
\end{abstract}

\section{Introduction}

    The Cremona group is the group of automorphisms of the field
$K=\mathbf{C}(x,y)$ of two independent commuting variables.
Alternatively it is the group of birational automorphisms of
$\mathbf{CP}^2$ or any other rational surface, because $K$ is it's
field of rational functions.\\
    We construct a non-commutative associative algebra $A$, which is a
consecutive localization of a free non-commutative algebra of two
generators at the set of elements outside a commutator ideal. The
main result is that the Cremona group acts on this
non-commutative algebra by outer automorphisms.\\
First proof of this statement is a direct computation: we provide
non-commutative analogs of generators of Cremona group and verify
that the relations known in commutative setting also hold in a
non-commutative up to inner automorphisms.\\
    Then we prove the same fact more abstractly, namely for a smooth proper algebraic variety
$X$ we construct a triangulated category $\widetilde{C}(X)$, which
is a birational invariant of $X$. By this we mean that if $X$ and
$Y$ are birationally equivalent, then $\widetilde{C}(X)$ and
$\widetilde{C}(Y)$ are equivalent as triangulated categories. The
category $\widetilde{C}(X)$ is a quotient of derived category of
coherent sheaves that are left orthogonal to the structure sheaf
of $O_X$ by the full subcategory of complexes of sheaves with the
support of codimension at least $1$. Then we study the structure
of $\widetilde{C}(\mathbf{CP^2})$ and prove that it is generated
by one object $P$ which is the image of $O(1)$. We also check that
this object is preserved under equivalences, so the Cremona group
will act by outer automorphisms on the ring of endomorphisms of
this object. Then we compute that
$RHom^0_{\widetilde{C}(\mathbf{CP}^2)}(P,P)=A$, so we get an
action of the Cremona group on the same non-commutative algebra as
previous. Then we verify that two actions coincide.\\
    I would like to express the gratitude to my advisor Maxim Kontsevich
for his inspiration, ideas and a lot of time that he generously
consecrated to me. Also I should thank Takuro Mochizuki for his
very helpful questions about those results.\\

\section{Construction of the noncommutative algebra $A$}\label{construction_A}

    In what follows we will construct a non-commutative algebra $A$ together with algebra
homomorphism $\phi:A\rightarrow K$ such that:\\
1. $R=\mathbf{C}<x,y>\subset A$, $R$ is an algebra of polynomials in
two non-commuting variables.\\
2. $I=A[A,A]A$ is the kernel of $\phi$\\
3. All the elements in $A\setminus I$ are invertible\\
\\
 Let us construct such an algebra by induction. Put $A_0:=R=\mathbf{C}<x,y>$,
$I_0:=R[R,R]R=R(xy-yx)R$(just notice that for associative
noncommutative algebras $B$ the following holds
$B[B,B]B=[B,B]B=B[B,B]$ and we call this submodule a commutator of
$B$ ). We successively construct $A_{i+1}$ as a localization of
$A_i$ at $A_i\setminus I_i$. $I_i$ is just a commutator of $A_i$,
denote $S_i=A_i\setminus I_i$. First let us explain what do we
mean by localization: consider $B_i:=A_i<u_s|s\in S_i>$ and let
$P_i$ be a bi-ideal in $B_i$ generated by $u_s s-1$ and $s u_s-1$.
We put $A_{i+1}:=B_i/P_i$. We also have induced maps
$A_i\rightarrow A_{i+1}$, so we may take an inductive limit
$A:=\underrightarrow{lim}A_i$.\\
    We verify the following:
\begin{lemma} $(1)$. $\phi_i:A_i\rightarrow K$ is well defined and
consistent with the maps $A_i\rightarrow A_{i+1}$.\\
$(2)$. $ker(\phi_i)=I_i$.\\
\end{lemma}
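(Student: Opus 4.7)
The plan is to proceed by induction on $i$, treating (1) and (2) simultaneously.

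For the base case $i=0$, the map $\phi_0 \colon \mathbf{C}\langle x,y\rangle \to K$ is defined by sending the non-commuting generators to the commuting generators; it is well-defined by the universal property of the free algebra. Its kernel obviously contains $xy-yx$, hence contains $I_0 = R(xy-yx)R$; conversely $R/I_0 \cong \mathbf{C}[x,y]$, which injects into $K$, so $\ker(\phi_0)=I_0$.

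For the inductive step, assume (1) and (2) hold at level $i$. By (2), every $s \in S_i = A_i \setminus I_i$ satisfies $\phi_i(s)\neq 0$ in $K$, hence is invertible there. Extend $\phi_i$ to the free product $B_i = A_i\langle u_s \mid s \in S_i\rangle$ by the rule $u_s \mapsto \phi_i(s)^{-1}$; this is well-defined by freeness. The relations $u_s s - 1$ and $s u_s - 1$ map to $0$, so the extension factors through $A_{i+1}=B_i/P_i$ to give $\phi_{i+1}$. Compatibility with $A_i \to A_{i+1}$ is tautological from the construction.

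The heart of the argument is (2) at level $i+1$. The inclusion $I_{i+1}\subseteq \ker(\phi_{i+1})$ is immediate because $K$ is commutative. For the reverse inclusion, I would identify $A_{i+1}/I_{i+1}$ with $K$ via universal properties. The quotient is commutative by construction, and the composition $A_i \to A_{i+1} \to A_{i+1}/I_{i+1}$ factors through $R_i := A_i/I_i$, which by the inductive hypothesis is a subring of $K$ (in fact $R_0 = \mathbf{C}[x,y]$ and $R_i = K$ for $i\ge 1$). Since the images of all $s \in S_i$ become invertible in $A_{i+1}/I_{i+1}$ (with inverses the images of $u_s$), the universal property of commutative localization produces a ring map $K \to A_{i+1}/I_{i+1}$. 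In the opposite direction, $\phi_{i+1}$ kills $I_{i+1}$ and so induces $A_{i+1}/I_{i+1} \to K$. A direct check on the generators $\bar a$ ($a\in A_i$) and $\bar u_s$ shows the two compositions are identity maps, whence $A_{i+1}/I_{i+1}\cong K$ and $\ker(\phi_{i+1})=I_{i+1}$.

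The only subtle point is this last identification: one must confirm that a non-commutative localization, after killing commutators, coincides with the corresponding commutative localization of the abelianization. This is where the explicit presentation $B_i/P_i$ is needed, so that every element of $A_{i+1}$ can be rewritten (modulo $I_{i+1}$) as a product $\bar a\,\bar u_b$ with $a\in A_i$, $b\in S_i$, and the inverse map $K \to A_{i+1}/I_{i+1}$, $\phi_i(a)\phi_i(b)^{-1}\mapsto \bar a\,\bar u_b$, is well-defined independently of the chosen representation. Once this is in place, both parts of the lemma follow for $i+1$, completing the induction, and the assertions for the inductive limit $A = \varinjlim A_i$ are formal consequences.
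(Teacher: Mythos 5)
Your proposal is correct, and for the crucial inclusion $\ker(\phi_{i+1})\subseteq I_{i+1}$ it takes a genuinely different route from the paper. The paper argues at the level of elements: using the relation $u_su_t=u_{ts}$ in $A_{k+1}$ and explicit congruences such as $au_s+bu_p\equiv(ap+bs)u_{ps}$ modulo $B_k[B_k,B_k]+P_k$, it shows that every element of $A_{k+1}$ is congruent modulo $I_{k+1}$ to a product $a u_s$ with $a\in A_k$, $s\in S_k$, and then applies the inductive hypothesis to $a$. You instead pass to abelianizations and invoke the universal property of commutative localization: since $A_{i+1}/I_{i+1}$ is commutative, is generated as a ring by the images of $A_i$ and of the $u_s$, and the images of $S_i$ are units in it, one obtains a map $K\cong \bar S_i^{-1}(A_i/I_i)\to A_{i+1}/I_{i+1}$ which, checked on these generators, is a two-sided inverse of the map induced by $\phi_{i+1}$; injectivity of the latter is exactly the desired inclusion. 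Your route is cleaner and hides the combinatorics in the universal property, while the paper's computation buys an explicit normal form $a u_s$ for elements of $A_{k+1}$ modulo $I_{k+1}$. Two remarks to make your write-up airtight: you should record that $\bar S_i$ is multiplicative and that $\bar S_i^{-1}(A_i/I_i)\cong K$ (both follow from the inductive hypothesis $\ker\phi_i=I_i$, which identifies $\bar S_i$ with the nonzero elements of a domain sitting between $\mathbf{C}[x,y]$ and $K$); and your closing caveat is unnecessary on this route --- once the map $K\to A_{i+1}/I_{i+1}$ is produced by the universal property, its well-definedness is automatic, so the rewriting of every element as $\bar a\,\bar u_b$ (which is precisely the computational content of the paper's proof) is not needed: verifying the two composites on ring generators suffices.
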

\textbf{Corollary}.  $\phi:A\rightarrow K$ is well defined and
$ker(\phi)=I$.\\ We can define $\phi$ on the inductive limit of
$A_k$'s which is $A$ and the statement $ker(\phi)=I$ follows from
the corresponding statement about $\phi_k$.
\begin{proof}  For $i=0$ we define an obvious map $\phi_0:R\rightarrow \mathbf{C}[x,y]\subset K$. It has
the commutator $I_0$ as the kernel.\\
    By induction we suppose that
$ker(\phi_i)=I_i$ for $i=k$. Then $\phi_k$ maps $S_k$ to non-zero
thus invertible elements of $K$, so we may extend the map
$\phi_k(u_s):=\phi_k(s)^{-1}$ to $B_k$ and it is easily seen that
$P_k$ maps to $0$, so we may define
$\phi_{k+1}$ in a consistent way on $A_{k+1}=B_k/P_k$.\\
    So $S_k$ may be characterized as the set of elements that maps by
$\phi_k$ not to zero, so it is a multiplicative system. We may
verify the following equality in $A_{k+1}$:\\
 $u_s u_t=u_{ts}$, because $ts(u_{ts}-u_s
u_t)=(tsu_{ts}-1)+t(su_s-1)u_t+(tu_t-1)\in P_k$. From the other
side $(u_{ts}ts-1)(u_{ts}-u_s u_t)\in P_k$. It follows that
$(u_{ts}-u_s u_t)\in P_k$.\\
    Let us prove now that $ker(\phi_{k+1})=I_{k+1}$ by induction on $k$.
Commutator  of $A_{k+1}$ is contained in the kernel of
$\phi_{k+1}$, because $K$ is commutative, so we have to prove
another inclusion. For any element $b\in B_k$ there exist $a,s\in
A_k$ such that $b-au_s\in B_k[B_k,B_k]+P_k$, because
\[a_1u_{s_1}a_2u_{s_2}..a_n-a_1a_2...a_nu_{s_1}u_{s_2}..u_{s_{n-1}}\in
B_k[B_k,B_k]\]
\[au_{s_1}u_{s_2}..u_{s_k}-au_{s_k...s_2s_1}\in P_k\]
\[au_s+bu_p-(ap+bs)u_{ps}\in B_k[B_k,B_k]+P_k\]
In particular for any $b'\in A_{k+1}=B_k/P_k$ there exist
$a',s'\in A_k$, such that $b'-a'u_s'\in
A_{k+1}[A_{k+1},A_{k+1}]=I_{k+1}$. So if $\phi_{k+1}(b')=0$ then
$\phi_{k+1}(a'u_{s'})=\phi_k(a')\phi_k(s')^{-1}=0$, so
$\phi_k(a')=0$ and by induction hypothesis $a'\in
A_k[A_k,A_k]=I_k$. This proves that $b'\in I_{k+1}$.
\end{proof}

    To define a morphism from an algebra $A$ to any algebra $B$ we
should prescribe images of $x$,$y$. Then a map from $A_0$ to $B$
will be defined. If the image of elements not in commutator of
$A_k$ is invertible in $B$, then we can extend it to the map from
$A_{k+1}$ to $B$.

\begin{lemma}
    Suppose $a,b\in A$ such that the field morphism $i:K\rightarrow K$
given by $i:x\mapsto \phi(a)$, $i:y\mapsto \phi(b)$ is injective.
Then there is a well defined algebra homomorphism $f:A\rightarrow A$
with $f(x)=a$, $f(y)=b$.
\end{lemma}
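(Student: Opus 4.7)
The plan is to construct $f$ inductively using the tower $A_0 \subset A_1 \subset \cdots$ with $A = \varinjlim A_i$, by producing compatible algebra homomorphisms $f_k : A_k \to A$ and passing to the limit. The key invariant I would maintain along the induction is the identity
\[
\phi \circ f_k \;=\; i \circ \phi_k \colon A_k \longrightarrow K,
\]
since it is precisely this identity that converts the injectivity hypothesis on $i$ into the invertibility condition needed to extend $f_k$ across each localization step.

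For the base case, the universal property of the free algebra $R = \mathbf{C}\langle x,y\rangle = A_0$ produces a unique homomorphism $f_0 \colon A_0 \to A$ with $f_0(x) = a$, $f_0(y) = b$. The two homomorphisms $\phi \circ f_0$ and $i \circ \phi_0$ agree on the generators $x,y$ by the very definition of $i$, so they coincide.

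Now suppose inductively that $f_k \colon A_k \to A$ has been constructed and satisfies $\phi \circ f_k = i \circ \phi_k$. For any $s \in S_k = A_k \setminus I_k$, the previous lemma gives $\phi_k(s) \neq 0$, whence $\phi(f_k(s)) = i(\phi_k(s)) \neq 0$ because $i$ is injective. Thus $f_k(s) \notin I = \ker \phi$, and by property~3 of $A$ the element $f_k(s)$ is invertible in $A$. Consequently $f_k$ extends uniquely to $B_k = A_k\langle u_s \mid s \in S_k\rangle \to A$ by $u_s \mapsto f_k(s)^{-1}$, and this extension annihilates all generators $u_s s - 1$ and $s u_s - 1$ of the bi-ideal $P_k$, hence descends to $f_{k+1} \colon A_{k+1} = B_k/P_k \to A$. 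The inductive invariant persists: on a generator $u_s$ we have $\phi(f_{k+1}(u_s)) = \phi(f_k(s))^{-1} = i(\phi_k(s))^{-1} = i(\phi_{k+1}(u_s))$.

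Taking the direct limit then yields the desired $f = \varinjlim f_k \colon A \to A$. There is no serious obstacle in the construction itself; the only delicate point is recognizing that the hypothesis ``$i$ injective'' is exactly what is needed at each inductive step to guarantee $f_k(S_k) \subset A^{\times}$, and this is why the compatibility $\phi \circ f_k = i \circ \phi_k$ must be threaded through the induction rather than proved after the fact.
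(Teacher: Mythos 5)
Your proof is correct and follows essentially the same inductive route as the paper's: build $f_k$ on each stage $A_k$ using the identity $\phi\circ f_k=i\circ\phi_k$ to transfer injectivity of $i$ into invertibility of $f_k(S_k)$, then pass to the colimit. Your write-up is a bit more explicit than the paper in actually verifying that this compatibility identity propagates through the inductive step, but this is a presentational refinement of the same argument, not a different approach.
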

\begin{proof}
    Given $a,b\in A$ we uniquely define $f_0:A_0\rightarrow A$ by
prescribing $f_0(x)=a$, $f_0(y)=b$. Now by induction suppose that
the map $f_k:A_k\rightarrow A$ is well defined and $f_k(x)=a$,
$f_k(y)=b$. Now we use $\phi\circ f_k=i\circ\phi_k$. It follows that
$\phi(f(S_k))$ is non-zero, because $\phi_k(S_k)$ is non-zero and
$i$ is injective. So $f_k(S_k)$ is invertible and we can uniquely
extend $f_k$ to $f_{k+1}:A_{k+1}\rightarrow A$.
\end{proof}

    To see that $A$ is indeed non-commutative and has non-trivial
elements(it's not obvious from the construction that $A$ doesn't
coincide with $K$) let us construct algebra morphisms from $A$ to
the following algebra $M=M_{k\times k}(K[[\epsilon]])$ - algebra
of matrices over $K[[\epsilon]]$ series with coefficients in $K$.
Let us choose two matrices of the form $X=xId+\epsilon X_1$,
$Y=yId+\epsilon Y_1$, where $X_1,Y_1$ are some matrices in $M$. We
can invert element of $M$ if and only if we can invert it's
reduction modulo $\epsilon$. This justifies that the map
$R\rightarrow M$ given by $x\mapsto X$, $y\mapsto Y$ may be
extended to the morphism $A\rightarrow M$.\\
    For example we see that $XY-YX=\epsilon^2(X_1Y_1-Y_1X_1)$ may be
non-zero, implying that $xy-yx$ is nonzero in $A$, so $A$ is indeed non-commutative.\\
\begin{lemma}
    The natural map $i:A_0=\mathbf{C}<x,y>\rightarrow A$ is an injection.
\end{lemma}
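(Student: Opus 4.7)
The plan is to exhibit, for any nonzero $p\in A_0=\mathbf{C}<x,y>$, an algebra homomorphism $f\colon A\to M_{k\times k}(K[[\epsilon]])$ of the type constructed just before the lemma --- sending $x\mapsto xI+\epsilon X_1$ and $y\mapsto yI+\epsilon Y_1$ for suitably chosen constant matrices $X_1,Y_1$ --- such that the image of $i(p)$ is nonzero. Since such an $f$ factors through $i$, this forces $i(p)\neq 0$.

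First I would write $p=p_0+p_1+\cdots+p_d$ as a sum of homogeneous components with $p_d\neq 0$, and examine the $\epsilon$-adic expansion of $f(p)$. Because $xI$ and $yI$ are central in $M$, expanding a length-$n$ monomial in the variables $Z_i=z_iI+\epsilon W_i$ (with $W_i\in\{X_1,Y_1\}$ matching $z_i\in\{x,y\}$) gives a sum over $S\subseteq\{1,\ldots,n\}$ of terms $\epsilon^{|S|}\bigl(\prod_{i\notin S}z_i\bigr)\bigl(\prod_{i\in S}W_i\bigr)$, with the scalar factors landing in $K$. Consequently the $\epsilon^d$-coefficient of $f(p)$ receives contributions only from monomials of length exactly $d$ with $S=\{1,\ldots,d\}$, and it works out to $p_d(X_1,Y_1)\in M_{k\times k}(\mathbf{C})$. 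It is therefore enough to pick $X_1,Y_1$ so that $p_d(X_1,Y_1)\neq 0$.

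The main obstacle is this last step: showing that for any nonzero homogeneous $q\in \mathbf{C}<x,y>$ there exist some $k$ and matrices $X_1,Y_1\in M_{k\times k}(\mathbf{C})$ with $q(X_1,Y_1)\neq 0$. This is the residual finite-dimensionality of the free algebra, which I would invoke as a standard fact (the quickest derivation is the generic matrix construction: for $k\geq 2$, assigning to $x,y$ generic $k\times k$ matrices with algebraically independent entries embeds $\mathbf{C}<x,y>$ into matrices over a polynomial ring, and one then specializes so as to preserve a chosen nonzero entry of $q$). Once $X_1,Y_1$ are fixed, the extension of $f$ from $A_0$ to $A$ is automatic by the remarks preceding the lemma, since any $s\in S_j$ reduces modulo $\epsilon$ to $\phi_j(s)I$, which is invertible in $M$ because $\phi_j(s)\in K^\times$. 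The composition $A_0\xrightarrow{i}A\xrightarrow{f}M$ then sends $p$ to an element with nonzero coefficient of $\epsilon^d$, whence $i(p)\neq 0$.
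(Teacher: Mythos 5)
Your proof is correct and follows essentially the same route as the paper's: construct homomorphisms $A\to M_{k\times k}(K[[\epsilon]])$ via $x\mapsto xI+\epsilon X_1$, $y\mapsto yI+\epsilon Y_1$, and read off from the top $\epsilon$-coefficient that a nonzero noncommutative polynomial would vanish on all complex matrices, contradicting residual finite-dimensionality of the free algebra. Your version is slightly more careful than the paper's at one point --- you correctly identify the $\epsilon^d$-coefficient as the evaluation of the top homogeneous component $p_d$ rather than of $p$ itself --- but the idea is identical.
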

\begin{proof}
    Let us consider an algebra $M=M_{k\times k}(K[[\epsilon]])$ of
matrices of big enough size $k$ with entries in the ring of formal
power series $K[[\epsilon]]$. For any pair of complex matrices
$S,T\in M_{k\times k}(\mathbf{C})$ it exists a ring homomorphism
$f:A\rightarrow M$ given by $x\mapsto xId+\epsilon S$, $y\mapsto
yId+\epsilon T$. So if some non-commutative polynomial $a\in A_0$
was in the kernel of the map $i:A_0\rightarrow A$, then $f\circ
i(a)=0$ and it's a polynomial in $\epsilon$ with coefficients in
matrices, in particular for the highest power of $\epsilon$ we
would have $a(S,T)=0$. So we would have that some non-commutative
polynomial is identically $0$ if we put as arguments any complex
matrices of any size. This is false, so we proved the lemma.
\end{proof}

\section{Direct extension of the Cremona group to non-commutative variables}

    Suppose $a=\begin{pmatrix}P(x)& Q(x)\\ R(x)& S(x)\end{pmatrix}\in
PGL(2,k(x))$. It gives an automorphism of $K$ over $\mathbf{C}$:

$$a:(x,y)\mapsto (x, \frac{yP(x)+Q(x)}{yR(x)+S(x)})$$

    Define also a map $\tau:(x,y)\mapsto (y,x)$.\\

    Let us summarize the results of \cite{Iskovskih85}:\\
    The Cremona group $Cr$ is generated by $\tau$ and $PGL(2,k(x))$ with the described
action.\\
    Consider an exact sequence of groups $1\rightarrow
PGL(2,k(x))\rightarrow B\rightarrow PGL(2,k)\rightarrow 1$, where
$B$ preserves the subfield $k(x)\subset k(x,y)$. $D=PGL(2,k)\times
PGL(2,k)=D_1\times D_2\subset B$ preserves both $k(x)$ and $k(y)$,
where the action of $M=\begin{pmatrix}a&&b\\c&&d\end{pmatrix}\in
D_1$ is given by $(x,y)\mapsto (\frac{ax+b}{cx+d},y)$. And the
action of $N=\begin{pmatrix}a&&b\\c&&d\end{pmatrix}\in D_2$ is
 given by $(x,y)\mapsto(x,\frac{ay+b}{cy+d})$.\\

    The Cremona group is generated by $\tau, B$ and by
\cite{Iskovskih85} all the relations are the consequences of the following:\\
1. $\tau^2=1$\\
2. $\tau D \tau=D$\\
3. $(\tau e)^3=a$\\
in the last relation $e:(x,y)\mapsto(x,\frac{x}{y})\in B$ and
$a:(x,y)\mapsto(\frac{1}{x},\frac{1}{y})\in D$\\

    Now let us reinterpret these relations: we denote
$D_1=D_2=PGL(2,k)$ and $G=PGL(2,k(x))$. Field inclusion $k\subset
k(x)$ induces group inclusion $D_2=G(k)\subset G=G(k(x))$. $D_1$
acts on $k(x)$, so we have also the action of $D_1$ on
coefficients of $PGL(2,k(x))=G$ which leaves $D_2$ stable.\\
    Consider the group generated by $G$ and
$\mathbf{Z}/2=<1,\tau>$, on which we put the following relations:\\
$1$. $D_1=\tau D_2 \tau$ normalizes $G$ and the induced action
of $D_1$ on $G$ is induced from the action of $PGL(2,k)=D_1$ on $k(x)$.\\
$2$. $(\tau e)^3=a$, where
$e=\begin{pmatrix}0&&x\\1&&0\end{pmatrix}\in G$ and
$a=\begin{pmatrix}0&&1\\1&&0\end{pmatrix}\times\begin{pmatrix}0&&1\\1&&0\end{pmatrix}\in
D_1\times D_2$.\\
    The group $B$ from previous notations is included in the exact
sequence: $1\rightarrow G\rightarrow B\rightarrow D_1\rightarrow
1$. So the the group that we described is the Cremona group, as
this set of relations is equivalent to the previous one.\\

    Let us now introduce an action on the previously constructed ring
$A$. $a\in GL(2,k(x))$ is a matrix $\begin{pmatrix}P(x) && Q(x)\\
R(x) && S(x)\end{pmatrix}$. It gives automorphisms of $A$:
$$t_a:(x,y)\mapsto (x, (yR(x)+S(x))^{-1}(yP(x)+Q(x)))$$\label{explicit_aut}
$$p_a:(x,y)\mapsto (x, (P(x)y+Q(x))(R(x)y+S(x))^{-1})$$
Before proceeding further, let us show how those automorphisms are
related: the dual algebra for $R=\mathbf{C}<x,y>$ would be
$R^{op}=\mathbf{C}<x,y>$. They are isomorphic as algebras:
\begin{align*}
&\rho:&R&\longrightarrow & R^{op}\\
&     &(x,y)&\longmapsto & (x,y)
\end{align*}
    In particular $\rho(xy)=yx$ and $\rho(x^2y^3x^7)=x^7y^3x^2$.\\
We can extend $\rho$ as an isomorphism between $A$ and $A^{op}$,
so we would have $\rho(P^{-1}Q)=\rho(Q)\rho(P)^{-1}$. Now we
observe that formulas for $t_a$ and $p_a$ are dual:
$$\rho\circ t_a=p_a\circ\rho$$
    So the result we would prove for $t_a$ would also be true for
$p_a$.\\
    Define a map $\tau:(x,y)\mapsto (y,x)$, and consider a group
$Cr^{nc}$ generated by $\tau$ and $t_a$ for all $a\in GL(2,k(x))$.
Commutator $A[A,A]A$ is preserved by any automorphism, so
$Cr^{nc}$ also acts on $A/A[A,A]A=\mathbf{C}(x,y)$ i.e. maps to
commutative Cremona group $Cr$. Let us denote $Cr^{in}$ the
subgroup of $Cr^{nc}$ generated by inner automorphisms, i.e. of
the type $(x,y)\mapsto(rxr^{-1},ryr^{-1})$ for some invertible
$r\in A^*$. We want to prove that $Cr^{in}$ is the kernel of the
map $Cr^{nc}\rightarrow Cr$.\\

\begin{theorem} The kernel of the map $Cr^{nc}\rightarrow Cr$
consists of inner automorphisms of $A$.
\end{theorem}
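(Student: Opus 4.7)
The strategy is to invoke Iskovskih's presentation of $Cr$ recalled just above, with generators $\tau$ and $G = GL(2,k(x))$ (scalar matrices identified with $1$), subject to the relations $\tau^2 = 1$, the multiplication law of $G$, the conjugation rule that $D_1 = \tau D_2 \tau$ normalises $G$ by the Möbius action on coefficients, and the triangle relation $(\tau e)^3 = a$. Since $Cr^{nc}$ is defined by the same symbols $\tau$ and $t_a$, it is enough to show that each of these defining relations, interpreted in $Cr^{nc}$, yields an inner automorphism of $A$: the kernel of $Cr^{nc} \to Cr$ is the normal closure of these relations, and $\mathrm{Inn}(A) \subset Cr^{nc}$ is already normal, so the kernel then lies in $\mathrm{Inn}(A)$.

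First I would verify the multiplication law $t_a \circ t_{a'} = t_{a a'}$ on the nose. Although $A$ is noncommutative, the entries of $a$ and $a'$ lie in the commutative subring $k(x) \subset A$ and commute with one another (though not with $y$); unwinding the nested fraction $(yR + S)^{-1}(yP + Q)$ after substituting $y \mapsto t_{a'}(y)$ reproduces ordinary matrix multiplication. The same formula shows that a scalar matrix $\lambda I$ yields $t_{\lambda I} = \mathrm{Inn}_{\lambda^{-1}}$, which is inner, taking care of the passage to $PGL$. The identity $\tau^2 = 1$ is immediate from the definition, and a direct calculation confirms that $\tau t_a \tau^{-1}$ implements the advertised Möbius action of $D_1$ on the coefficients of $G$.

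The crux is the triangle relation. For $e = \left(\begin{smallmatrix} 0 & x \\ 1 & 0 \end{smallmatrix}\right)$ the defining formula gives $t_e : (x, y) \mapsto (x, y^{-1} x)$, and iterating $\tau t_e$ yields
\[
(\tau t_e)^3 : (x, y) \longmapsto \bigl(x^{-1} y\, x^{-1} y^{-1} x,\; x^{-1} y^{-1} x\bigr).
\]
The automorphism $a_0 : (x,y) \mapsto (x^{-1}, y^{-1})$ exists by the earlier extension lemma for $A \to A$ and equals $t_b \tau t_b \tau$ for $b = \left(\begin{smallmatrix} 0 & 1 \\ 1 & 0 \end{smallmatrix}\right)$. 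With $r = x^{-1} y \in A^{*}$ one checks that both coordinates of $(\tau t_e)^3$ agree with $\mathrm{Inn}_r \circ a_0$, so the triangle relation holds up to an explicit inner automorphism.

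The hard part is this last verification: a priori it is not clear that a single conjugator $r$ handles both coordinates at once. Matching the $y$-coordinate forces $xr$ to commute with $y$, so $r = x^{-1} f(y)$; then matching the $x$-coordinate pins $f(y) = y$, giving $r = x^{-1} y$. With every relation of Iskovskih lifting into $\mathrm{Inn}(A) \subset Cr^{nc}$, the theorem follows.
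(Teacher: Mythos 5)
Your proposal is correct and follows essentially the same route as the paper: verify Iskovskih's relations ($t_at_b=t_{ab}$, centrality of scalar matrices giving inner automorphisms, $\tau^2=1$, the normalization of $G$ by $\tau D_2\tau$, and the triangle relation) lift to $Cr^{nc}$ up to inner automorphisms, with the same conjugator $x^{-1}y$ handling $(\tau e)^3=a$. The only addition is that you spell out the group-theoretic bookkeeping (kernel = normal closure of lifted relators, $\mathrm{Inn}(A)$ normal), which the paper leaves implicit.
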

\begin{proof}
    For this we need to check that when we lift a relation for
generators of $Cr$ to $Cr^{nc}$ it belongs to $Cr^{in}$.

\begin{lemma}  $t_at_b=t_{ab}$ for any $a,b\in GL(2,k(x))$\end{lemma}
\begin{proof}
    Let $b=\begin{pmatrix} P(x) && Q(x)\\R(x) && S(x)\end{pmatrix}$,
$a=\begin{pmatrix} P_1(x) && Q_1(x)\\R_1(x) &&
S_1(x)\end{pmatrix}$. The action of $a$ on $A$ is given by
$$t_b:(x,y)\mapsto(x,(yR(x)+S(x))^{-1}(yP(x)+Q(x)))=:(x,y_1)$$
$$t_a:(x,y_1)\mapsto(x,(y_1R_1(x)+S_1(x))^{-1}(y_1P_1(x)+Q_1(x)))=:(x,y_2)$$
$$y_1R_1(x)+S_1(x)=(yR+S)^{-1}(y(PR_1+RS_1)+(QR_1+SS_1))$$
$$y_1P_1(x)+Q_1(x)=(yR+S)^{-1}(y(PP_1+RQ_1)+(QP_1+SQ_1))$$
$$t_at_b:(x,y)\mapsto(x,(y(PR_1+RS_1)+(QR_1+SS_1))^{-1}(y(PP_1+RQ_1)+(QP_1+SQ_1)))$$
So $t_at_b=t_{ab}$, because
\[ab=\begin{pmatrix}P_1(x)P(x)+Q_1(x)R(x) &&
P_1(x)Q(x)+Q_1(x)S(x)\\R_1(x)P(x)+S_1(x)R(x) &&
R_1(x)Q(x)+S_1(x)S(x)\end{pmatrix}\]
\end{proof}

    When $GL(2,k(x))\subset Cr^{nc}$ maps to $Cr$ it has a center in
the kernel, so suppose $d=\begin{pmatrix}d(x)&& 0\\ 0 &&
d(x)\end{pmatrix}$ is a diagonal matrix, then
$t_{d}:(x,y)\mapsto(x, d(x)^{-1}yd(x))$ is a conjugation by
$d(x)$, so $t_d\in Cr^{in}$.\\

    Let us verify a first relation: suppose
$m=\begin{pmatrix}a&&b\\c&&d\end{pmatrix}\in D_2=GL(2,k)$. Then
$m_1=\tau t_m\tau:(x,y)\mapsto((cx+d)^{-1}(ax+b),y)$. From this
formula we easily see that $m_1$ normalizes a subgroup
$GL(2,k(x))\subset Cr^{nc}$ and it's action is induced from
it's action on $k(x)$, so the first relation is verified.\\

    Let us now verify the second relation: $(\tau e)^3=a$. Remind that
$$\tau:(x,y)\mapsto(y,x)$$
$$e:(x,y)\mapsto(x,y^{-1}x)$$
$$a:(x,y)\mapsto(x^{-1},y^{-1})$$
    Then we calculate
$$\tau e:(x,y)\mapsto(y^{-1}x,x)$$
$$(\tau e)^2:(x,y)\mapsto(x^{-1}y^{-1}x,y^{-1}x)$$
$$a^{-1}(\tau e)^3:(x,y)\mapsto(x^{-1}yxy^{-1}x,x^{-1}yx)$$
    The last automorphism is a conjugation by $x^{-1}y$, so
$a^{-1}(\tau e^3)\in Cr^{in}$.
\end{proof}

\section{The proof using derived categories}

    In this section we would construct a triangulated category $\widetilde{C}$
with a specific isomorhism class of objects $P$, and it would be a
birational invariant of rational surfaces. As a consequence, the
Cremona group will act on this category by equivalences and will
preserve $P$, so it will act by outer automorphisms on the graded
ring $H^*(End_C(P))$, in particular it's degree $0$ part coincides
with the non-commutative algebra $A$ considered previously. We
would then verify that the action of $Cr$ on $A$ coincides with
the one given by explicit formulas.\\

\subsection{Notations}
    First let us make some conventional notations:\\
$D$ - bounded derived category of coherent sheaves on $\mathbf{CP}^2$\\
$\pi:X\rightarrow\mathbf{CP}^2$ -  sequence of blow-ups of $\mathbf{CP}^2$ or equivalently a regular map invertible outside a finite set of points.\\
$D(X)$ - bounded derived category of coherent sheaves on $X$. By
omitting $X$ we would mean the corresponding category for
$\mathbf{CP}^2$.\\
$O,O(1),O(2)$ - line bundles
$O_{\mathbf{CP}^2},O_{\mathbf{CP}^2}(1),O_{\mathbf{CP}^2}(2)$, we
would also see them as objects of $D$ placed at degree $0$.
Depending on the context we would denote by the same symbols the
objects $L\pi^*O,L\pi^*O(1),L\pi^*O(2)\in D(X)$ for the functor
$L\pi^*:D\rightarrow D(X)$.\\
$E$ - exceptional curve of a blow-up.\\
$\widetilde{D}(X)$ - full subcategory of $D(X)$ consisting o
complexes of sheaves left orthogonal to $O_X$.\\
$D^1(X)$ - full subcategory of $D(X)$ of complexes of sheaves with
the dimension of support $\le 1$.\\
$\widetilde{D}^1(X)$ - full subcategory of $D(X)$ of objects which
are both in $\widetilde{D}(X)$ and in $D^1(X)$.\\
$RHom(F,G)$ would usually be a derived functor in category $D(X)$.
If both $F,G$ belong to $D$, we consider $RHom$ in $D$, because
$L\pi^*$ is a fully faithful functor.\\
$\widetilde{C}(X)=\widetilde{D}(X)/\widetilde{D}^1(X)$ - quotient
triangulated category.\\
$P\in\widetilde{C}(X)$ - isomorphism class of object $O(1)$.\\
$a,b,e\in Hom(O,O(1))$ - sections corresponding to $X,Y,Z$.\\
$R=\mathbf{C}<x,y>$ - ring of polynomials in non-commutative
variables.\\
$A$ - non-commutative algebra, constructed in previous sections.\\
$K=\mathbf{C}(x,y)$\\
$A^{\bullet}[1]$ a shift of a complex $A^{\bullet}$ to the left\\

\subsection{Strategy of the proof}

First we prove that the quotient category
$\widetilde{C}=\widetilde{D}/\widetilde{D}^1$ is split generated
by one object $P$ and $RHom_{\widetilde{C}}(P,P)\cong A^{\bullet}$
is some dg-algebra. We prove that this dg-algebra is concentrated
in non-positive degrees and $H^0(A^{\bullet})=A$ is a
non-commutative algebra that we constructed earlier.\\
Next we prove that the functor $L\pi^*:D\rightarrow D(X)$ induces an
equivalence of categories $\widetilde{C}$ and $\widetilde{C}(X)$ for
any sequence of blow-ups $\pi:X\rightarrow\mathbf{CP}^2$. From this
we construct an action of the Cremona group on the category
$\widetilde{C}$ by equivalences of category.\\
Then we verify, that this action preserves the isomorphism class
of object $P$, so the Cremona group would act by outer
automorphisms on $A$.\\
    At last we compute this action on elements $x,y\in A$ and verify
that it coincides with our explicit formulas.\\

    To start with, we show how this construction works for the category
$D$ without taking left orthogonal to the structure sheaf, so we
get an action of the Cremona group on the category $C=D/D^1$. In
this case we just recover the action of Cremona group on the field
$K=\mathbf{C}(\mathbf{CP}^2)$. The functor
$\widetilde{C}\rightarrow C$ induced from inclusion
$\widetilde{D}\subset D$ corresponds to the
"commutativization" morphism $comm:A\rightarrow K$.\\

\subsection{Action of the Cremona group on $K$ through derived
categories}

$\mathbf{C}(X)$ the field of rational functions on $X$.
$D^b(\mathbf{C}(X)-mod)$ - bounded derived category of finitely
generated modules over it.
\begin{lemma} The functor of restriction to a generic point induces an equivalence of triangulated categories
$C(X)=D(X)/D^1(X)$ and $D^b(\mathbf{C}(X)-mod)$.\end{lemma}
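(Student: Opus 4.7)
The plan is to exhibit the equivalence as the functor induced by restriction of a complex of coherent sheaves to the generic point $\eta \in X$. Define $\eta^*: D(X) \to D^b(\mathbf{C}(X)-mod)$ by $F^{\bullet} \mapsto F^{\bullet}_\eta$; since localization at $\eta$ is exact, this is a triangulated functor. An object of $D^1(X)$ has cohomology supported on a closed subset not containing $\eta$, hence is killed by $\eta^*$, and by the universal property of the Verdier quotient we obtain an induced triangulated functor $\bar\eta^*: C(X) \to D^b(\mathbf{C}(X)-mod)$.

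Essential surjectivity is immediate because $\mathbf{C}(X)$ is a field: every object of $D^b(\mathbf{C}(X)-mod)$ is quasi-isomorphic to a finite direct sum $\bigoplus_i \mathbf{C}(X)^{n_i}[-i]$, and each summand is the image under $\bar\eta^*$ of the free sheaf $O_X^{n_i}[-i] \in D(X)$.

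Full faithfulness is the substantive step, and I would establish it by writing both sides as the same filtered colimit over dense opens. A morphism in $C(X)$ is represented by a roof $F^{\bullet} \xleftarrow{s} G^{\bullet} \to H^{\bullet}$ with the cone of $s$ in $D^1(X)$, hence supported on a closed $Z \subset X$ of codimension at least one; restricting to $U = X \setminus Z$ makes $s|_U$ a quasi-isomorphism and turns the roof into an honest map $F^{\bullet}|_U \to H^{\bullet}|_U$. A standard calculus-of-fractions argument (using that intersections of dense opens are dense) yields
\[
\mathrm{Hom}_{C(X)}(F^{\bullet}, H^{\bullet}) \;\cong\; \varinjlim_{U} \mathrm{Hom}_{D(U)}(F^{\bullet}|_U, H^{\bullet}|_U),
\]
the colimit being taken over dense open subsets $U\subset X$. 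Since $\mathbf{C}(X) = \varinjlim_U O_X(U)$, and since $\mathrm{Hom}$ between bounded complexes of coherent sheaves on a Noetherian scheme commutes with such a filtered colimit of sections, the right-hand side agrees with $\mathrm{Hom}_{D^b(\mathbf{C}(X)-mod)}(\eta^* F^{\bullet}, \eta^* H^{\bullet})$, giving full faithfulness.

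The main obstacle is precisely this colimit description of $\mathrm{Hom}_{C(X)}$: one has to verify carefully that every morphism in the quotient descends to an honest map over some dense $U$, and that two roofs inducing the same map on a small enough $U$ represent equal morphisms in $C(X)$. This relies on the fact that $D^1(X)$ is a thick subcategory whose supports form a cofiltered system under intersection, so that the Verdier calculus of fractions is controlled by shrinking to smaller dense opens; the remaining identification with the generic stalk is then automatic since $\eta = \varprojlim_U U$.
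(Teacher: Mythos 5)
Your overall strategy is the same as the paper's: both proofs express $\mathrm{Hom}_{C(X)}$ as a filtered colimit of $\mathrm{Hom}_{D(U)}$ over dense open $U\subset X$ and then identify the colimit with $\mathrm{Hom}$ over the generic stalk. However, your final step --- ``since $\mathrm{Hom}$ between bounded complexes of coherent sheaves on a Noetherian scheme commutes with such a filtered colimit of sections'' --- is exactly the point that needs an argument, and it is not where you flag the difficulty. The issue is that for a non-affine $U$ the group $\mathrm{Hom}_{D(U)}(F|_U,H|_U)$ receives contributions from higher sheaf cohomology via the local-to-global spectral sequence $H^p\bigl(U,\mathcal{E}xt^q(F,H)\bigr)\Rightarrow \mathrm{Ext}^{p+q}_U(F,H)$, and it is not formal that the $p>0$ terms die in the colimit over all dense opens.

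The paper closes precisely this gap by two refinements that you omit. First, it replaces the colimit over all dense opens by the cofinal subsystem of opens $U=X\setminus Z$ that are \emph{affine}; then $\mathrm{Hom}_{D(U)}(-,-)$ is just $\mathrm{Hom}$ in $D^b(O_U(U)\text{-mod})$ and the higher cohomology terms vanish, so passing to the colimit genuinely gives the stalk. Second, it first reduces to the single generator $O$: in $C(X)$ every line bundle is isomorphic to $O$ (any nonzero section has cone supported in codimension $\ge 1$), vector bundles are built by extensions from rank-one subsheaves, and on a smooth variety these generate $D(X)$, so it suffices to check full faithfulness on $O$, where the colimit computation is the elementary fact $H^i(U,O_U)=0$ for $i>0$ on affine $U$ and $\varinjlim_U O_U(U)=\mathbf{C}(X)$. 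You could salvage your version by adding the passage to affine $U$ (or by proving directly that $\varinjlim_U\mathrm{Ext}^i_U(F|_U,H|_U)$ is the stalk $\mathcal{E}xt^i(F,H)_\eta$), but as written the assertion that $\mathrm{Hom}$ commutes with the colimit is not self-evident and is in fact the substantive content of the lemma.
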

\begin{proof}
    Let $\eta(F)$ be a restriction of a coherent sheaf $F$ from $X$ to
a generic point. Clearly $\eta(F)$ is a finite rank vector space
over $\mathbf{C}(X)$. It induces a functor $\eta:D(X)\rightarrow
D^b(C(X)-mod)$. Complexes of sheaves with the support of
codimension $\ge 1$ are exactly the ones that become acyclic after
applying this functor. So we have a functor from the quotient
category $C(X)=D(X)/D^1(X)$ to $D^b(\mathbf{C}(X)-mod)$. All the
rank $1$ sheaves are isomorphic in $C(X)$, let us denote by $O$
it's isomorphism class. All the vector bundles on $X$ are obtained
by extensions from rank $1$ sheaves, so the triangulated
subcategory of $C(X)$ generated by cones and shifts of $O$
contains images of vector bundles and thus is equivalent to $C(X)$.\\
    If $Z$ is a closed sub-scheme of $X$ and $i:U\hookrightarrow X$ is a complement of $Z$,
then $D(X)/D_Z(X)$ is equivalent to $D(U)$, where the equivalence
is induced by a restriction functor $i^*:D(X)\rightarrow D(U)$.
It's a well-known fact. In particular if $U$ is affine then there
is a commutative algebra $Q=O_U(U)$ and $D(U)$ is equivalent to
$D^b(Q-mod)$. In particular $Hom^i_{D(X)/D_Z(X)}(O_U,O_U)=0$ for
$i\ne 0$ and $Hom^0_{D(X)/D_Z(X)}(O,O)=O_U(U)$. \\
  By definition of the quotient
category
\[Hom^i{C(X)}(A,B)=\underrightarrow{lim}_f Hom^i_{D(X)}(A,F)\]
The limit in the right hand side is taken over morphisms
$f:B\rightarrow F$ such that $Cone(f)\in D^1(X)$. For a given $f$
 let $Z_1$ be a support of $Cone(f)$. It is a closed proper
sub-scheme of $X$. We choose a closed sub-scheme $Z$, such that
$Z_1\subset Z\subset X$ and the complement of $Z$ in $X$ is
affine. So we may rewrite the formula as follows:
\[Hom^i{C(X)}(A,B)=\underrightarrow{lim}_Z\underrightarrow{lim}_g Hom^i_{D(X)}{D(A,F)}\]
First limit is over closed sub-schemes $Z$ with affine complement,
second limit is over $g$ such that $Cone(g)$ is supported in $Z$.
But now the second limit equals to
$Hom^i_{D(X)/D_Z{X}}(F,G)=Hom^i_{D(U)}(F,G)$. In particular
$Hom^i_{C(X)}(O,O)=0$ for $i\ne 0$ and
$Hom^0_{C(X)}(O,O)=\underrightarrow{lim}_Z O_U(U)=\mathbf{C}(X)$.
So the functor induced from restriction to generic point
$C(X)=D(X)/D^1(X)\rightarrow D^b(\mathbf{C}(X)-mod)$ is fully
faithful on $O$. And because every object of $C(X)$ is a direct
sum of shifts of $O$, it is fully faithful, thus an equivalence.
\end{proof}

    So we recover a field $K=\mathbf{C}(X)$ of rational functions from derived
category of coherent sheaves. Let us now recover the action of the group of birational automorphisms on it.
Suppose $\pi:X\rightarrow Y$ is a blow-up with a smooth center.\\

\begin{lemma}. The functor between quotient categories
$\Psi:C(Y)\rightarrow C(X)$ induced by $L\pi^*$ is an equivalence
and is a composition of equivalences $C(X)\leftarrow^{\eta^*_X}
D^b(K-mod)\rightarrow^{\eta^*_Y} C(Y)$.\end{lemma}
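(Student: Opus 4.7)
The plan is to exploit the previous lemma, which provides equivalences $\eta^*_X: C(X)\to D^b(K\text{-mod})$ and $\eta^*_Y: C(Y)\to D^b(K\text{-mod})$; here we use that $\pi$ is birational (being a blow-up) and so $\mathbf{C}(X)=\mathbf{C}(Y)=K$. The strategy is to show that $L\pi^*$ descends to a functor $\Psi: C(Y)\to C(X)$, then verify that $\eta^*_X\circ\Psi\cong \eta^*_Y$; the equivalence of $\Psi$ and its factorization through $D^b(K\text{-mod})$ then follow formally.

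First I would check that $L\pi^*$ sends $D^1(Y)$ into $D^1(X)$, so that $\Psi$ is well-defined on the quotients. Since $X$ and $Y$ are smooth surfaces and the center of a blow-up on a surface is a point, the set-theoretic preimage of any subset of dimension $\le 1$ remains of dimension $\le 1$ (a point goes to the exceptional $\mathbf{CP}^1$, a curve goes to its proper transform together with possibly some exceptional components). Regularity of both $X$ and $Y$ gives $\pi$ finite Tor-dimension, so for a bounded complex $F\in D^1(Y)$ the derived pullback $L\pi^*F$ is bounded, and each cohomology sheaf is supported in $\pi^{-1}$ of the support of the corresponding cohomology sheaf of $F$. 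Hence $L\pi^*F\in D^1(X)$ and $\Psi$ is defined.

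Next I would produce a natural isomorphism $\eta^*_X\circ L\pi^*\cong \eta^*_Y$ of functors $D(Y)\to D^b(K\text{-mod})$. Birationality of $\pi$ implies that the generic point of $X$ maps to the generic point of $Y$, with the induced map of local rings being the identity on $K$. Stalks at the generic point are exact and commute with $L\pi^*$, yielding $(L\pi^*F)_{\eta_X}\cong K\otimes^L_K F_{\eta_Y}\cong F_{\eta_Y}$ naturally in $F$. Passing this isomorphism through the quotient categories gives $\eta^*_X\circ \Psi\cong \eta^*_Y$.

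Finally, since $\eta^*_X$ and $\eta^*_Y$ are both equivalences by the previous lemma, the relation $\eta^*_X\circ \Psi\cong \eta^*_Y$ forces $\Psi$ to be an equivalence, equal to the composition $C(Y)\xrightarrow{\eta^*_Y} D^b(K\text{-mod})\xrightarrow{(\eta^*_X)^{-1}} C(X)$ asserted in the statement. The main technical point is the second step: one needs the finite Tor-dimension of a blow-up between regular surfaces to ensure that the derived (rather than ordinary) pullback preserves both boundedness and the support condition defining $D^1$, after which the natural isomorphism at the generic point is essentially tautological.
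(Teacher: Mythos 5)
Your proof is correct, and it takes a genuinely different route from the paper. The paper first notes the compatibility $\eta^*_X\circ L\pi^* = \eta^*_Y$ (as you do), but then establishes that $\Psi$ is an equivalence \emph{directly}, via the semi-orthogonal decompositions $D(X)=\langle L\pi^*D(Y),O_{|E}\rangle$ and $D^1(X)=\langle L\pi^*D^1(Y),O_{|E}\rangle$ and the fact that $\langle A,B\rangle/B\simeq A$; well-definedness of $\Psi$ is implicit in the decomposition of $D^1(X)$. You instead treat the previous lemma as the workhorse: invoking it for both $X$ and $Y$ to get the two $\eta^*$-equivalences, checking separately that $L\pi^*(D^1(Y))\subset D^1(X)$ by a support argument, and then deducing that $\Psi\cong(\eta^*_X)^{-1}\circ\eta^*_Y$ formally. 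This is shorter and cleaner for the present lemma, but it does not generalize: the semi-orthogonal argument is exactly the one the paper reuses in Theorem~\ref{bir_invariant} for $\widetilde{C}$, where no analogue of $D^b(K\text{-mod})$ is available and the ``shortcut'' through the generic point is unavailable. So the paper is likely running the harder argument here deliberately, as a warm-up.

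One small imprecision in your support argument: you say ``each cohomology sheaf \emph{[of $L\pi^*F$]} is supported in $\pi^{-1}$ of the support of the corresponding cohomology sheaf of $F$.'' There is no term-by-term correspondence, since $L\pi^*$ mixes cohomology sheaves through the hyperTor spectral sequence. The correct statement, which is all you need, is that $\operatorname{supp}(L\pi^*F)\subset\pi^{-1}(\operatorname{supp} F)$, because $L\pi^*$ commutes with restriction to the open set $\pi^{-1}(Y\setminus\operatorname{supp} F)$, where $F$ vanishes. Your conclusion $L\pi^*F\in D^1(X)$ is therefore fine, but the reasoning as written should be tightened.
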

\begin{proof} If $\eta_X$ is an embedding of a generic point in
$X$ and $\eta_Y$ in $Y$, then clearly $\pi\circ\eta_X=\eta_Y$, so
in the derived categories of coherent sheaves we have a functor
isomorphism $\eta^*_X\circ L\pi^*=\eta^*_Y$

Let $E$ be an exceptional curve of this blow-up, then we have a
semi-orthogonal decomposition $D(X)=<L\pi^*D(Y),O_{|E}>$ and also
$D^1(X)=<L\pi^*D^1(Y),O_{|E}>$. So we can use that the quotient
triangulated category $<A,B>/B$ is equivalent to $A$. It follows
that $D(X)/<O_{|E}>\simeq L\pi^*D(Y)$, so
$$D(X)/D^1(X)=(D(X)/<O_{|E}>)/L\pi^*D(Y)\simeq
L\pi^*D(Y)/L\pi^*D^1(Y).$$ So $L\pi^*$ induces an equivalence
between $C(Y)$ and $C(X)$.\end{proof}

    It means that the quotient categories are independent of the
rational surface, thus implying that the Cremona group acts on
both. All line bundles placed in degree $0$ become isomorphic in
this category $C(X)$, so the Cremona group preserves the
isomorphism class of line bundles and acts in a natural way on the
endomorphism ring of this class, which is $K$. It's easy to verify
that this action coincides with the natural action of
$Cr=Aut(\mathbf{C}(x,y)/\mathbf{C})$ on $K=\mathbf{C}(x,y)$.\\

\subsection{Construction of the birational invariant}

    Here we make a general construction, which to a smooth proper
algebraic manifold $X$ associates a triangulated category
$\widetilde{C}$ invariant under birational isomorphisms.\\
    Let $\widetilde{D}(X)$ be a full subcategory of $D^b(Coh(X))$
consisting of objects left orthogonal to $O_X$, i.e. complexes of
sheaves $F^*$, such that $RHom_{D(X)}(F^*,O_X)$ is acyclic. It is
a triangulated category and it may be empty for some manifolds.\\
    Consider it's full triangulated subcategory
$\widetilde{D}^1(X)$ consisting of the complexes of sheaves that
are left orthogonal to $O_X$ and have a positive codimension of
support, it means the cohomology of the complex have support on
the subvarieties of dimension strictly less than the dimension of
$X$.\\
    Define $\widetilde{C}(X)$ as a quotient of triangulated
categories $\widetilde{C}(X)=\widetilde{D}(X)/\widetilde{D}^1(X)$
in a sense of Verdier\cite{Verdier96}.\\
    So for any object $Y$ in $\widetilde{D}(X)$ let $Q_Y$ be a
category of morphisms $f:Y\rightarrow Z$, such that $Cone(f)$ is
isomorphic to an object of $\widetilde{D}^1(X)$. Then by results
of Verdier that we reproduce from formula $(12.1)$ of
\cite{Drinfeld04} we compute the morphisms in a quotient category
this way:
    \begin{equation}  \label{morphism_quotient}
    Ext^i_{\widetilde{D}(X)/\widetilde{D}^1(X)}(U,Y)=
\rightlimit{(Y\to Z)\in Q_Y}Ext^i_{\widetilde{D}(X)}(U,Z)
    \end{equation}

\begin{theorem}\label{bir_invariant}
    If smooth $X$ and $Z$ are birationally equivalent, then
$\widetilde{C}(X)$ and $\widetilde{C}(Z)$ are equivalent as
triangulated categories.
\end{theorem}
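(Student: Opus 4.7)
My plan is to reduce the theorem to a single smooth blow-up and then imitate, with the extra care needed to preserve left-orthogonality to $O$, the proof of the analogous equivalence $C(Y)\simeq C(X)$ from the previous subsection.

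By the weak factorization theorem of Abramovich--Karu--Matsuki--W\l{}odarczyk, any birational map between smooth proper complex varieties factors as an alternating sequence of blow-ups and blow-downs along smooth centers. It therefore suffices to establish $\widetilde{C}(Y)\simeq\widetilde{C}(X)$ for a single blow-up $\pi:X\rightarrow Y$ with smooth center $W\subset Y$ of codimension $c$.

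The functor providing the equivalence would be $\Psi=L\pi^*$. Using the standard vanishing $R\pi_*O_X=O_Y$ for the blow-up of a smooth center in a smooth variety, together with the adjunction $L\pi^*\dashv R\pi_*$ and the projection formula, one verifies at once: (i) $L\pi^*:D(Y)\rightarrow D(X)$ is fully faithful; (ii) $RHom_X(L\pi^*F,O_X)=RHom_Y(F,O_Y)$, so $L\pi^*$ sends $\widetilde{D}(Y)$ into $\widetilde{D}(X)$; and (iii) pullback preserves codimension of support, so $L\pi^*$ sends $\widetilde{D}^1(Y)$ into $\widetilde{D}^1(X)$. Hence $L\pi^*$ descends to a functor $\Psi:\widetilde{C}(Y)\rightarrow\widetilde{C}(X)$, and fully faithfulness of $\Psi$ at the quotient level would follow by the same double-colimit argument used in the previous lemma on $C(X)$, rewriting $Hom_{\widetilde{C}(X)}(L\pi^*U,L\pi^*V)$ via formula (\ref{morphism_quotient}) as a colimit over closed subschemes with affine complement and comparing with the analogous expression on $Y$.

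For essential surjectivity, given $F\in\widetilde{D}(X)$ one would decompose it via the Orlov semi-orthogonal decomposition of $D(X)$ along $\pi$, obtaining a triangle $B\rightarrow F\rightarrow L\pi^*A$ with $B$ supported on the exceptional divisor $E$ and $A\in D(Y)$. The main obstacle is that the SOD components are not individually compatible with the $\widetilde{D}$-condition: applying $RHom_X(-,O_X)$ to the triangle yields only the cancellation $RHom_Y(A,O_Y)\cong RHom_X(B,O_X)[1]$, so in general neither $A$ lies in $\widetilde{D}(Y)$ nor does $B$ lie in $\widetilde{D}^1(X)$, even though their defects cancel. My plan is to exploit this cancellation in a cone construction: by induction on the length of the Orlov SOD, build an auxiliary morphism from $F$ into an object of $\widetilde{D}^1(X)$ killing the exceptional components one at a time, using at each step the cancellation above to arrange that the remaining $L\pi^*$-part genuinely lies in $L\pi^*\widetilde{D}(Y)$. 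Reconciling the Orlov SOD with the left-orthogonality condition in this way is where I expect the real work to lie.
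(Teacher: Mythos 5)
You have the right reduction (weak factorization, then a single smooth blow-up) and the right functor ($L\pi^*$), and you have correctly located the difficulty: the exceptional component in the Orlov-type decomposition of an object of $\widetilde{D}(X)$ need not itself be left orthogonal to $O_X$, so the semi-orthogonal decomposition does not obviously restrict to $\widetilde{D}$. But at exactly this point your proposal stops being a proof: the ``inductive cone construction exploiting the cancellation'' is only announced, not carried out, and you say yourself that this is where the real work lies. The missing idea is a mutation of the decomposition. Besides $D(X)=\langle D_0(X),L\pi^*D(Y)\rangle$, where $D_0(X)$ consists of objects killed by $R\pi_*$, one should use $D_1(X):=K_X^{-1}\otimes^L D_0(X)$, which by Serre duality is \emph{left} orthogonal to $L\pi^*D(Y)$, giving $D(X)=\langle L\pi^*D(Y),D_1(X)\rangle$. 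Now every object of $D_1(X)$ is left orthogonal to all of $L\pi^*D(Y)$, in particular to $O_X=L\pi^*O_Y$, and is supported on the exceptional divisor (tensoring by a line bundle does not change supports, and $D_0(X)$ is supported there); hence $D_1(X)\subset\widetilde{D}^1(X)$. Decomposing $F\in\widetilde{D}(X)$ in this mutated decomposition, the $D_1(X)$-piece is automatically in $\widetilde{D}(X)$, so the $L\pi^*$-piece is too, and by adjunction ($RHom_X(L\pi^*G,O_X)=RHom_Y(G,R\pi_*O_X)=RHom_Y(G,O_Y)$) it lies in $L\pi^*\widetilde{D}(Y)$. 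One gets compatible decompositions $\widetilde{D}(X)=\langle L\pi^*\widetilde{D}(Y),D_1(X)\rangle$ and $\widetilde{D}^1(X)=\langle L\pi^*\widetilde{D}^1(Y),D_1(X)\rangle$, and the equivalence of quotients follows formally: first kill $D_1(X)$, then use full faithfulness of $L\pi^*$. This single observation dissolves the obstacle you flagged; no induction on components and no cancellation bookkeeping is needed.

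A secondary gap: your full-faithfulness argument for $\Psi$ at the quotient level does not transfer from the earlier lemma on $C(X)$ as stated. There the colimit in formula (\ref{morphism_quotient}) could be rewritten over closed subschemes with affine complement because \emph{arbitrary} objects supported in codimension $\ge 1$ were allowed as cones; in $\widetilde{C}$ the admissible denominators must lie in $\widetilde{D}^1$, i.e.\ be left orthogonal to $O$, and the cofinality argument via affine complements is no longer available. In the approach via compatible semi-orthogonal decompositions this issue never arises, since full faithfulness on the quotients comes from the general fact that $\langle\mathcal{A},\mathcal{B}\rangle/\mathcal{B}\simeq\mathcal{A}$ applied twice.
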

\begin{proof}
    In virtue of the results of \cite{AKMW}(weak factorization theorem) any birational
automorphism may be decomposed as a sequence of blow-ups and
blow-downs with smooth centers. It means that it is enough to
prove our theorem for such kind of morphisms.\\
    Let $\pi:Z\rightarrow X$ be a blow-up of $X$ with a
smooth center $Y$. We have a pair of adjoint functors
$(L\pi^*,R\pi_*$ on $D(X),D(Z)$ and moreover $L\pi^*O_X=O_Z$ and
$R\pi_*O_Z=O_X$.\\
    Let us denote $D_0(Z)$ a full subcategory of $D(Z)$ consisting
of objects $F$ such that $R\pi_*F$ is acyclic, and let
$L\pi^*D(X)$ a full subcategory of $D(Z)$ consisting of images of
$L\pi^*$.\\
\begin{lemma}
$(1)$. $D_0(Z)$ and $L\pi^*D(X)$ are triangulated categories.\\
$(2)$. The natural transformation $F\rightarrow R\pi_*L\pi^*F$ in $D(X)$ is an isomorphism.\\
$(3)$. $L\pi^*$ induces an equivalence of triangulated categories
$D(X)$ and $L\pi^*D(X)$.\\
$(4)$. $L\pi^*D(X)$ is left orthogonal to $D_0(Z)$, i.e.
$RHom_{D(Z)}(L\pi^*F,G)$ is acyclic.\\
$(5)$. For any object $F\in D(X)$ there is an exact triangle
$F_1\rightarrow F\rightarrow F_0$, where $F_0\in D_0(Z), F_1\in
L\pi^*D(X)$, and moreover such a triangle is unique up to
isomorphism. $F_1$ is isomorphic to $L\pi^*R\pi_*F$.\\
$(6)$. $D_0(Z)$ is supported on exceptional divisor of the
blow-up.
\end{lemma}
\begin{proof}
$(1)$ trivial. $(2)$ is a local statement, but locally on $X$ any
object of $D(X)$ has a free bounded resolution by a structure
sheaf, for which the map $O_X\rightarrow R\pi_*L\pi^*O_X$ is an
isomorphism. $(3)$ follows from projection formula
$RHom_{D(Z)}(L\pi^*F,L\pi^*G)=RHom_{D(X)}(F,R\pi_*L\pi^*G)$ and
$(2)$. $(4)$ also follows from the projection formula. The
triangle in $(5)$ is constructed by completing a natural map
$L\pi^*R\pi_*F\rightarrow F$ to an exact triangle, map coming from
the adjunction of identity morphism $R\pi_*F\rightarrow R\pi_*F$.
Actually $F_1$ represents a functor $X\mapsto
Hom_{L\pi^*D(X)}(X,F)$ and $F_0$ represents a functor $X\mapsto
Hom_{D_0(Z)}(F,X)^*$. $(6)$ follows from the definition $R\pi_*$,
because $\pi$ is an isomorphism outside an exceptional divisor.
\end{proof}

    A concise way to summarize lemma is the statement that $D(Z)$ admits
a semi-orthogonal decomposition $D(Z)=<D_0(Z),L\pi^*D(X)>$.
Actually we have a stronger result proved at Proposition 3.4 of
\cite{Orlov92}: there is a semi-orthogonal decomposition
$D(Z)=<D(Y)_{-r+1},..,D(Y)_1,L\pi^*D(X)>$, where $r$ is a
codimension of $Y$ in $X$, but we would not use it here.\\
    We may also introduce the full triangulated subcategory $D_1(Z)\subset
D$, which consists of objects $K^{-1}_Z\otimes^L F$ for $F\in
D_0(Z)$. Because of Serre duality it is a left orthogonal to
$L\pi^*D(X)$ and we have a semi-orthogonal decomposition
$D(Z)=<L\pi^*D(X),D_1(Z)>$.\\
\begin{lemma}
    $(1)$. $D_1(Z)$ is full triangulated subcategory of
    $\widetilde{D}^1(Z)$.\\
    $(2)$. There are semi-orthogonal decompositions
    \[\widetilde{D}(Z)=<L\pi^*\widetilde{D}(X),D_1(Z)>\]
    \[\widetilde{D}^1(Z)=<L\pi^*\widetilde{D}^1(X),D_1(Z)>\]
\end{lemma}
\begin{proof}
    Objects is $D_1(Z)$ are left orthogonal to $L\pi^*D(X)$ and in
particular to $O_Z=L\pi^*O_X$. The support of an object of $D(Z)$
doesn't change when we tensor it with a linear bundle, so the
elements of $D_1(Z)$ are supported on an exceptional divisor of
blow-up, because the elements of $D_0(Z)$ are, as we see from the
last statement of the previous lemma.
\end{proof}
    This lemma implies the statement of the theorem, because to
obtain
\[\widetilde{C}(Z)=\widetilde{D}(Z)/\widetilde{D}^1(Z)=<L\pi^*\widetilde{D}(X),D_1(Z)>/<L\pi^*\widetilde{D}^1(X),D_1(Z)>\]
we may first quotient $\widetilde{D}(Z)$ by it's full subcategory
$D_1(Z)$, so we will have an equivalence
\[\widetilde{D}(Z)/\widetilde{D}^1(Z)\rightarrow L\pi^*\widetilde{D}(X)/L\pi^*\widetilde{D}^1(X)\]
And the latter is equivalent to $\widetilde{C}(X)$, because
$L\pi^*$ is fully faithful functor.
\end{proof}

\subsection{General statements about quotients of dg-categories}

    For convenience of the reader, let us summarize few facts concerning
quotients of dg-categories. Our main reference would be
\cite{Drinfeld04}.\\
    Suppose we have a dg-algebra $A^{\bullet}$ over a field $k$ with a unit.
We can see it as a dg-category $\mathcal{A}$ with one object $P$
and the endomorphism ring $Hom_{\mathcal{A}}(P,P)=A^{\bullet}$.
Then from this dg-category with one object we construct a
pre-triangulated category $\mathcal{A}^{pre-tr}$ (cf.$[D,2.4]$),
which has the formal sums $B=(\oplus_{j=1}^n P^{\oplus
m_j}[k_j],q)$ as objects. Here $q$ stands for a homogeneous degree
one matrix $q_{ij}\in Hom^1_{\mathcal{A}}(P^{\oplus m_j},P^{\oplus
m_i})[k_i-k_j]=Mat_{m_i\times m_j}(\textit{A}^{k_i-k_j+1})$. We
ask that $q_{ij}=0$ for $i\geq j$ and $dq+q^2=0$. The homomorphism
between the formal sums $B=(\oplus_{j=1}^{j=n}P^{\oplus
m_j}[k_j],q)$ and $B'=(\oplus_{j=1}^{j=n'}P^{\oplus
m'_j}[k'_j],q')$ is given by matrices $f=(f_{ij})$ where
\[f_{ij}\in Hom(P^{\oplus m_j},P^{\oplus m'_i})[k'_i-k_j]=Mat_{m'_i\times m_j}(A^{k'_i-k_j})\]
 The differential on homomorphism
groups is $df=(df_{ij})+q'f+(-1)^l fq$, where $l=deg(f_{ij})$.
Define the triangulated category $\mathcal{A}^{tr}$ as the
homotopy category of $\mathcal{A}^{pre-tr}$, which means
$Hom_{\mathcal{A}^{tr}}(X,Y)=H^0(Hom_{\mathcal{A}^{pre-tr}}(X,Y))$.
The category $\mathcal{A}^{pre-tr}$ contains cones. Let us
consider $M\in Hom_{\mathcal{A}}(P^m,P^m)$ a $m\times m$-matrix
with coefficients in $A$ of degree $0$. An element $Cone(M)$ would
be an object $(P^m\oplus P^m[1],q)$, where $q_{21}=M[-1]$ and
$q_{11}=q_{12}=q_{22}=0$. Naturally this element goes to the cone
of a morphism $M$ in the triangulated category. In the set up of
\cite{Drinfeld04} if $\mathcal{B}$ is a full dg-subcategory of
$\mathcal{A}$, then the quotient dg-category
$\mathcal{A}/\mathcal{B}$ has the same objects as $\mathcal{A}$
and for every object $X\in \mathcal{B}$ it has an additional
morphism $u_X\in Hom^{-1}_{\mathcal{A}/\mathcal{B}}(X,X)$ with
differential $d(u_X)=id_X$. The resulting homotopy category
$(\mathcal{A}/\mathcal{B})^{tr}$ is equivalent to the
quotient $\mathcal{A}^{tr}/\mathcal{B}^{tr}$.\\

\subsection{Localization in dg-sense}
    Suppose we have a closed morphism in dg-category
    $\mathcal{A}$. It will give some morphism in the homotopy category $\mathcal{A}^{tr}$ and
we want to understand what does it mean to invert this morphism.
It corresponds to taking a quotient $\mathcal{A}/Cone(a)$, so that
$(\mathcal{A}/Cone(a))^tr$ is a localization of $\mathcal{A}^{tr}$
at $a$.\\
    Let us now consider a dg-category $\mathcal{A}$ with one object $P$,
where $Hom_{\mathcal{A}}(P,P)=A^{\bullet}$ is a dg-algebra
concentrated in non-positive degrees. Let
$Q=Cone(a)\in\mathcal{A}^{pre-tr}$ for some element $a\in A^0$ of
degree $0$. We want to consider a quotient of dg-category
$D_1=<P,Q>$ which has two objects $P,Q$ by dg-subcategory
$D_2=<Q>$ with one object $Q$. Both $D_1$ and $D_2$
are seen as full subcategories of $\mathcal{A}^{pre-tr}$.\\

\begin{lemma}  Quotient dg-category
$D_1/D_2=<P,Q>/<Q>$ is equivalent to a dg-category with one object
$P$ and $End_{D_1/D_2}(P)=B$ - a dg-algebra, concentrated in
non-positive degrees. Moreover $H^0(B)$ is a localization of
$H^0(A)$ at $a$.\end{lemma}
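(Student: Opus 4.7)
The plan is to unfold Drinfeld's quotient construction explicitly, pass to a quasi-equivalent one-object dg-category, check the degree concentration by a direct count on zigzag compositions, and finally identify $H^0(B)$ as a localization. By the recipe recalled in the preceding subsection, $D_1/D_2$ has the same two objects $P$ and $Q$ as $D_1$, together with one additional morphism $u_Q\in\mathrm{Hom}^{-1}(Q,Q)$ satisfying $d(u_Q)=\mathrm{id}_Q$. This makes $\mathrm{id}_Q$ a boundary, so $Q$ becomes acyclic (equivalent to zero) in the homotopy category $(D_1/D_2)^{tr}$. Since $Q=\mathrm{Cone}(a)$, its vanishing in the quotient means that the class of $a\colon P\to P$ becomes invertible in $H^0(B)$, where $B:=\mathrm{Hom}_{D_1/D_2}(P,P)$. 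Because $Q\simeq 0$ in the homotopy category, the inclusion of the full sub-dg-category on the single object $\{P\}$ into $D_1/D_2$ is a quasi-equivalence, which gives the one-object presentation claimed in the statement.

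To verify that $B$ is concentrated in non-positive degrees, I would use Drinfeld's explicit description of morphisms in the quotient: an element of $B$ is a finite sum of direct morphisms in $A^\bullet=\mathrm{Hom}_{D_1}(P,P)$ together with zigzag compositions of the form $P\xrightarrow{\alpha_0}Q\xrightarrow{u_Q}Q\xrightarrow{\gamma_1}Q\xrightarrow{u_Q}\cdots\xrightarrow{u_Q}Q\xrightarrow{\beta}P$ with $k\ge 1$ insertions of $u_Q$. Using the shift formula $\mathrm{Hom}^n(P[i],P[j])=A^{n+j-i}$ together with the hypothesis that $A^\bullet$ lives in non-positive degrees, the graded pieces $\mathrm{Hom}(P,Q)$, $\mathrm{Hom}(Q,P)$ and $\mathrm{Hom}(Q,Q)$ have maximal nonzero degrees $0$, $1$ and $1$ respectively (taking $Q=P\oplus P[1]$). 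A zigzag with $k$ insertions of $u_Q$ therefore has total degree at most $0+k(-1)+(k-1)\cdot 1+1=0$, and direct morphisms contribute at most degree $0$ from $A^0$. Hence $B^n=0$ for all $n>0$.

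Finally I would identify $H^0(B)$ with the universal localization $H^0(A)[a^{-1}]$ in the sense of Section~\ref{construction_A}. The natural dg-map $A^\bullet\hookrightarrow B$ induces a ring homomorphism $H^0(A)\to H^0(B)$ which sends the class of $a$ to an invertible element, and hence factors through $H^0(A)[a^{-1}]\to H^0(B)$. For the reverse direction the main task is to show that every degree-$0$ cocycle in $B$ is cohomologous to a product of the form $f_0\,a^{-1}f_1\,a^{-1}\cdots a^{-1}f_n$ with $f_i\in A^0$: one writes out a generic zigzag using the explicit cone data for $Q$, applies the identity $d(u_Q)=\mathrm{id}_Q$ to collapse consecutive $u_Q$-insertions into factors of $a^{-1}$, and concludes by the universal property of the non-commutative localization. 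This combinatorial reduction of zigzag cocycles to localization-form products is the main obstacle; the abstract acyclicity of $Q$ only yields the invertibility of $a$, and the substance of the lemma lies precisely in ruling out additional cohomology classes in $B$.
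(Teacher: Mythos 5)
Your setup is the same as the paper's: invoke Drinfeld's recipe, adjoin $u_Q$ with $d(u_Q)=\mathrm{id}_Q$, note that the full sub-dg-category on $\{P\}$ becomes quasi-equivalent to the quotient, and then count degrees. Your degree count on zigzags is correct and matches the paper's (with $Q=P\oplus P[1]$, the three hom-groups $\mathrm{Hom}(P,Q),\mathrm{Hom}(Q,P),\mathrm{Hom}(Q,Q)$ have top degrees $0,1,1$, and each $u_Q$ contributes $-1$, so the total is $\le 0$). That part is fine.

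The gap is in the final step, and you flag it yourself: the ``combinatorial reduction of zigzag cocycles to localization-form products'' is exactly what the paper actually \emph{does}, and your proposal leaves it as a stated obstacle. Concretely, the paper writes down four explicit $A^{\bullet}$-algebra generators of $B$ using the cone data --- $t=f_1ue_0$ (degree $0$), $x=f_1ue_{-1}$, $y=f_0ue_0$ (degree $-1$), $z=-f_0ue_{-1}$ (degree $-2$) --- together with $dt=0$, $dx=ta-1$, $dy=1-at$, $dz=ax+ya$. From this it reads off that $B^0=A^0\langle t\rangle$ and that $B^{-1}$ is spanned by $A^0$-multiples of $x$, $y$ and by $A^{-1}$, so that $H^0(B)=H^0(A^{\bullet})\langle t\rangle/(ta-1,\,at-1)$, the non-commutative localization of $H^0(A^{\bullet})$ at $a$. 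Your appeal to the universal property of localization only gives a map $H^0(A)[a^{-1}]\to H^0(B)$; to conclude you need both surjectivity \emph{and} injectivity of this map, and injectivity is precisely the statement that the degree-$0$ boundaries are generated by $ta-1$, $at-1$ and $dA^{-1}$ --- which again requires the explicit description of $B^{-1}$ and the differential. Neither direction follows from the abstract acyclicity of $Q$ or from the bare identity $d(u_Q)=\mathrm{id}_Q$; the hands-on computation with the $e$'s, $f$'s, and $u$ is the content of the lemma, and it is missing from your proposal.
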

\begin{proof} First let us write homomorphism groups in the
category $D_1$:
\begin{align*}
&End_{D_1}(P)=A^{\bullet}\\
&Hom_{D_1}(P,Q)=A^{\bullet}\oplus A^{\bullet}[1]=A^{\bullet}e_0\oplus A^{\bullet}e_{-1}\\
&Hom_{D_1}(Q,P)=A^{\bullet}\oplus A^{\bullet}[-1]=A^{\bullet}f_0\oplus A^{\bullet}f_1\\
&End_{D_1}(Q)=A^{\bullet}[1]\oplus A^{\bullet}\oplus
A^{\bullet}\oplus A^{\bullet}[-1]=A^{\bullet}e_0f_1\oplus
A^{\bullet}e_0f_0\oplus A^{\bullet}e_{-1}f_1\oplus
A^{\bullet}e_{-1}f_0
\end{align*}
    We would also have the following relations: $f_1e_0=f_0e_{-1}=0$,
$f_1e_{-1}=f_0e_0=1$, $id_Q=e_0f_0+e_{-1}f_1$. Lower indices
indicate the degrees of
homomorphisms. Also all $e_{-1}, e_0, f_0,f_1$ commute with $A^{\bullet}$.\\
    On these spaces there would be a differential:  $de_{-1}=ae_0$,
$df_0=-f_1a$.\\
    The procedure described in \cite{Drinfeld04} suggests that the
quotient category would have one more homomorphism $u\in End(Q)$
of degree $-1$, such that $du=id_Q$. So as an
$A^{\bullet}$-algebra $B=End_{D_1/D_2}(P)$ would be freely
generated by four elements: $t=f_1ue_0$, $x=f_1ue_{-1}$,
$y=f_0ue_0$, $z=-f_0ue_{-1}$. Of them $t$ would be of degree $0$,
$x,y$ - of degree $-1$, and $z$ of degree $-2$. One verifies the
following formulas: $dt=0$, $dx=ta-1$, $dy=1-at$, $dz=ax+ya$. The
elements of $B$ of degree $0$ are $A^0<z>$ - non-commutative
polynomials in $z$ with coefficients of degree $0$ in $A$, the
elements of degree $-1$ are $a_1xa_2+a_3ya_4+a_5$, where $a_i$ are
of degree $0$ and $a_5$ is of degree $-1$. So
$H^0(B)=H^0(A^{\bullet})[z]/(az-1, za-1)$ which is a localization
of $H^0(A^{\bullet})$ at $a$, and the lemma is proved.\end{proof}

\textbf{Comment}. At this point it is not clear, whether the
algebras that we obtain have non-trivial cohomologies in negative degrees.\\

    For further use we should improve this lemma for cones of more general
type. As before we consider $D_1=<P,Q>$, $D_2=<Q>$ full
subcategories of $\mathcal{A}^{pre-tr}$, but now $Q=Cone(M)$ where
$M$ is a matrix over $A^{\bullet}$ of homogeneous degree $0$. We would prove:\\

\begin{lemma} Suppose that $M=\begin{pmatrix}1_{k\times k} && 0\\
0 && a\end{pmatrix}+dM_{-1}$, $a\in A^{\bullet}$ is a degree $0$
element. Then $End_{D_1/D_2}(P)=B$ - a dg-algebra, concentrated in
non-positive degrees and $H^0(B)$ is a localization of
$H^0(A^{\bullet})$ at $a$.\end{lemma}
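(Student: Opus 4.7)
The plan is to reduce to the previous lemma by progressively simplifying the cone $Q=Cone(M)$ without altering the dg-quotient up to quasi-equivalence. Since $A^{\bullet}$ is concentrated in non-positive degrees, every degree zero element is automatically closed, so in particular $dM_0=0$ for $M_0=\begin{pmatrix}1_{k\times k} & 0 \\ 0 & a\end{pmatrix}$, and all the cones considered below are well defined.

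First I would check that $Cone(M)\cong Cone(M_0)$ in the homotopy category $\mathcal{A}^{tr}$. This is the standard fact that chain-homotopic closed morphisms have isomorphic cones: given $M_{-1}$ of degree $-1$ with $dM_{-1}=M-M_0$, the endomorphism of the underlying graded object $P^{\oplus(k+1)}\oplus P^{\oplus(k+1)}[1]$ equal to the identity plus an off-diagonal block built from $M_{-1}$ assembles into a closed, invertible dg-morphism between the two cones. Second, because $M_0$ is block diagonal, $Cone(M_0)$ splits in $\mathcal{A}^{pre-tr}$ as $Cone(1_{k\times k})\oplus Cone(a)$. Third, $Cone(1_{k\times k})$ is contractible: the obvious off-diagonal shift on $P^{\oplus k}\oplus P^{\oplus k}[1]$ is a contracting homotopy for its identity.

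Next I would appeal to the invariance of Drinfeld's dg-quotient under quasi-equivalence of the subcategory being killed, together with the fact that adjoining a contractible object has no effect (the formal generator $u_X$ that Drinfeld adjoins is already realized by an honest morphism whenever $\mathrm{id}_X$ is a boundary). This yields a quasi-equivalence between $D_1/D_2=\langle P, Cone(M)\rangle/\langle Cone(M)\rangle$ and the quotient $\langle P, Cone(a)\rangle/\langle Cone(a)\rangle$ computed in the previous lemma. The conclusion that $B=End_{D_1/D_2}(P)$ is concentrated in non-positive degrees with $H^0(B)=H^0(A^{\bullet})[a^{-1}]$ follows at once.

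The main obstacle is this last structural step: making precise the assertion that Drinfeld's dg-quotient descends along quasi-equivalences of the pair $(\mathcal{A},\mathcal{B})$ and absorbs contractibles in $\mathcal{B}$. The two preliminary calculations (homotopic morphisms yielding isomorphic cones, and a block-diagonal cone decomposing as a direct sum) are completely routine; the real content is invoking the correct formal property of the construction of \cite{Drinfeld04}.
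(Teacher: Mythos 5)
Your proposal is correct in substance but follows a genuinely different route from the paper. The paper proves the lemma by direct computation inside Drinfeld's construction: it writes explicit $A^{\bullet}$-bases $e_0^i,e_{-1}^i$ and $f_0^i,f_1^i$ of $Hom_{D_1}(P,Q)$ and $Hom_{D_1}(Q,P)$, adjoins $u$ with $du=id_Q$, and computes $d(f_0^iue_0^j)$ and $d(f_1^iue_{-1}^j)$ to show that in $H^0$ all generators $f_1^iue_0^j$ with $i\ne k+1$ or $j\ne k+1$ collapse to $\delta_{ij}$, leaving the single generator $t=f_1^{k+1}ue_0^{k+1}$ with $at=ta=1$; this yields the presentation of $H^0(B)$ and makes the non-positivity of $B$ visible on the nose. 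You instead reduce to the already-proved rank-one lemma via three routine facts (homotopic closed morphisms have isomorphic cones, a block-diagonal cone splits, $Cone(1_{k\times k})$ is contractible), so that $Cone(M)\cong Cone(a)$ already in $\mathcal{A}^{tr}$; this is cleaner and explains conceptually why only the pivot entry $a$ matters. Two points need firming up. First, as you yourself flag, the comparison of the two quotients is the real content: there is no dg functor between the pairs $(\langle P,Cone(M)\rangle,\langle Cone(M)\rangle)$ and $(\langle P,Cone(a)\rangle,\langle Cone(a)\rangle)$, so an appeal to invariance of the dg quotient under quasi-equivalence of pairs cannot be applied verbatim; the efficient fix is the statement the paper itself quotes from \cite{Drinfeld04}, namely $(\mathcal{A}/\mathcal{B})^{tr}\simeq\mathcal{A}^{tr}/\mathcal{B}^{tr}$, after which both quotients compute $Hom$'s of $P$ in the Verdier quotient of $\mathcal{A}^{tr}$ by the triangulated subcategory generated by $Cone(M)\cong Cone(a)$, with identifications compatible with the $H^0(A^{\bullet})$-algebra structure (alternatively, route the comparison through the larger pair on objects $P$, $Cone(M)$, $Cone(a)$). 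Second, a quasi-equivalence only controls cohomology, so your argument literally gives $H^{>0}(B)=0$ rather than that the dg-algebra $B$ itself sits in non-positive degrees; the latter, however, is immediate from the construction (the adjoined $u$ has degree $-1$, and every word from $P$ to $P$ through $Q$ has total degree $\le 0$), and only the cohomological statement is used later, so this caveat is harmless.
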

\begin{proof} Remember that $Cone(M)=(P^{\oplus m}\oplus P^{\oplus
m}[1], q=\begin{pmatrix}0 && M \\ 0 && 0\end{pmatrix})$. So we
would have the basis $e_0^i, e_{-1}^i$, $i=1..k+1$ of
$Hom_{D_1}(P,Q)$ as an $A^{\bullet}$-module. We would for
simplicity denote by the column vectors
$e_0=\begin{pmatrix}e_0^1\\..\\e_0^{k+1}\end{pmatrix}$,
$e_{-1}=\begin{pmatrix}e_{-1}^1\\..\\e_{-1}^{k+1}\end{pmatrix}$.
Moreover we can multiply these elements by $A^{\bullet}$ from the
left as well as from the right and these multiplications commute.
$de_{-1}=Me_0$. Then we can pick a dual basis of $Hom_{D_1}(Q,P)$:
$f_0^i,f_1^i$, $i=1..k+1$, and similarly denote row vectors
$f_0=(f_0^1,..,f_0^{k+1})$, $f_1=(f_1^1,..,f_1^{k+1})$. Then
$df_0=-f_1M$. We would have
$f_1^ie_0^j=f_0^ie_{-1}^j=0$,$f_1^ie_{-1}^j=f_0^ie_0^j=\delta_{ij}$,$id_Q=\Sigma_{i=0}^{k+1}(e_0^if_0^i+e_{-1}^if_1^i)$.\\
Again the quotient category is obtained adding element $u$ of
degree $-1$ to the space $End(Q)$ such that $du=id_Q$. So
$B=End_{D_1/D_2}(P)$ has no elements in positive degree. In degree
$0$ it is freely generated by $f_1^iue_0^j$ as $A^{0}$-algebra. So
$B^0$ as an algebra is isomorphic to $A^0<u_{ij}>$ -
non-commutative polynomials over $A^0$ and $u_{ij}$ maps through
isomorphism to $f^i_1ue^j_0$. Further notice that
$d(f_0^iue_0^j)=(df_0^i)ue_0^j+f_0^i(du)e_0^j=-(\Sigma_k
m^{ik}f_1^k ue_0^j)+f_0^iId_Qe_0^j=-(\Sigma_k dm_{-1}^{ik}(f_1^k
ue_0^j))-(m^{ii}-dm_{-1}^{ii})f_1^iue_0^j+\delta_{ij}$, where
$m_{-1}^{ij}$ stand for entries of the matrix $M_{-1}$. We know
that $d(f_1^kue_0^j)=0$, so modulo the border
$\delta_{ij}=(m^{ii}-dm_{-1}^{ii})f_1^iue_0^j$. But
$(m^{ii}-dm_{-1}^{ii})=1$ for $i\neq k+1$, so in $H^0(B)$ we have
$\delta_{ij}=f_1^iue_0^j$ for $i\neq k+1$. The similar calculation
of $d(f_0^iue_{-1}^j)$ will show that $\delta_{ij}=f_1^iue_0^j$
for $j\neq k+1$. So we deduce that $H^0(B)$ is generated by
$t=f_1^{k+1}ue_0^{k+1}$ over $H^0(A^{\bullet})$. The border will
be generated by $d(f_0^{k+1}ue_0^{k+1})=-at+1$ and
$d(f_1^{k+1}ue_{-1}^{k+1})=-ta+1$. So $H^0(B)$ is a localization
of $H^0(A^{\bullet})$ at $a$, which proves the lemma.\end{proof}

\subsection{Computation of
$\widetilde{C}(\mathbf{CP}^2)=\widetilde{D}/\widetilde{D}^1$}

    It is known that a derived category $D$ of coherent sheaves on
$\mathbf{CP}^2$ is generated by $O,O(1),O(2)$. Consider a
subcategory $\widetilde{D}$ which is left orthogonal to $O$, it is
generated by $O(1)$ and $O(2)$. Denote
$V=RHom_D(O(1),O(2))=Hom_{\mathbf{CP}^2}(O(1),O(2))$ - a
$3$-dimensional vector
space.\\

    Denote $P_1=O(1),P_2=O(2)$, then $Hom_D(P_1,P_2)=V$ and all the
other higher hom's between these objects vanish.
$End(P_1)=End(P_2)=\mathbf{C}$. Any object in $D$ may be written
as a direct sum of shifts of the complexes $P_1^m\rightarrow
P_2^n$ and the arrow is given by an $n\times m$
matrix with elements in $V$.\\

    $D^1$ is a full subcategory of $D$, consisting of complexes
of coherent sheaves with the dimension of support at most $1$.
Then $\widetilde{D}^1$ is the intersection of $D^1$ and
$\widetilde{D}$, more precisely it is  full subcategory of $D$,
consisting of objects left orthogonal to $O$ and with a dimension
of support at most $1$. The condition of left orthogonality to $O$
means that objects in $\widetilde{D}^1$ are direct sums of shifts
of $Cone(M)$, where $M:O(1)^{\oplus m}\rightarrow O(2)^{\oplus
n}$. The condition to have support of dimension at most $1$ means,
that the map given by $M$ is generically invertible, which is
exactly the case when $m=n$
and the commutative determinant of $M$ is non-trivial.\\

    Choose a non-zero element $e\in V$. First we would prove that
$\widetilde{D}/Cone(e)\simeq D^b(R-mod)$, where $D^b(R-mod)$ is
the bounded derived category of complexes $R$-modules of finite
type.\\
    We would proceed in a dg-setting, so we consider a dg-category $Q$
with two objects $P_1$, $P_2$ with $Hom(P_1,P_2)=V$ -
$3$-dimensional of degree $0$, $Hom(P_2,P_1)=0$, endomorphism
rings of both objects are one-dimensional and are generated by
identity morphisms. This category corresponds to a Kronecker
quiver with three arrows. The associated derived category
$\mathcal{Q}^{tr}$ is equivalent to $\widetilde{D}$. First we
would quotient it by an object $Cone(e:P_1\rightarrow P_2)$, where
$e$ is some non-trivial morphism. Denote by $Q_1$ a full
dg-subcategory of $\mathcal{Q}^{pre-tr}$ consisting of objects
$P_1,P_2,Cone(e)$. Let us also denote by $\mathcal{R}$ a
dg-category with one object $P$ and
$Hom_\mathcal{R}(P,P)=R=\mathbf{C}<x,y>$ - an algebra of
non-commutative polynomials on $x,y$ of degree $0$.\\
\\
\begin{lemma}Dg-categories $\mathcal{R}$ and $Q_1/Cone(e)$ are quasi-equivalent.\end{lemma}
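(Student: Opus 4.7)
The plan is to construct an explicit dg-functor $F: \mathcal{R} \to Q_1/Cone(e)$ sending the unique object $P$ to $P_1$, and to show that it is a quasi-equivalence. The crucial observation is that $Cone(e) \cong 0$ in the quotient, so the map $e: P_1 \to P_2$ becomes invertible and $P_1 \cong P_2$; in particular every object of $Q_1/Cone(e)$ is quasi-isomorphic to a shift of $P_1$, which gives essential surjectivity. The task therefore reduces to showing that $B := End_{Q_1/Cone(e)}(P_1)$ is quasi-isomorphic as a dg-algebra to $R = \mathbf{C}\langle x, y\rangle$, concentrated in degree zero.

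To compute $B$, I would apply the recipe of the preceding lemma: complete $e$ to a basis $\{e, a, b\}$ of $V$, adjoin $u \in End^{-1}(Cone(e))$ with $du = id_{Cone(e)}$, and enumerate the new compositions through $Cone(e)$. The degree-$0$ endomorphisms of $P_1$ produced in this way are spanned, modulo the boundary $du = id$, by two elements $x, y$ corresponding to ``$e^{-1}a$'' and ``$e^{-1}b$''; the slot one would naively read as ``$e^{-1}e$'' is identified with $id_{P_1}$ by the boundary, so $e$ itself produces no new generator. A direct verification, parallel to the calculation in the proof of the previous lemma, shows that $dx = dy = 0$ and that $H^0(B)$ is freely generated over $\mathbf{C}$ by $x, y$, with no relations arising, because the Kronecker quiver underlying $\mathcal{Q}$ has none and the localization procedure only inverts $e$.

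The main obstacle is the stronger acyclicity statement: whereas the previous lemma controls only $H^0$, here one must verify that $H^{-k}(B) = 0$ for all $k > 0$, so that $B$ is genuinely quasi-isomorphic to its zeroth cohomology. I would handle this by writing down an explicit bar-type complex for $B$ whose generators alternate morphisms in $End_{Q_1}(Cone(e))$ with copies of $u$, and use the relation $du = id_{Cone(e)}$ to produce a Koszul-like contracting homotopy, analogous to the classical fact that localizing a free associative algebra at a free generator has no higher derived functors. Once acyclicity in negative degrees is secured, the assignment $x \mapsto x$, $y \mapsto y$ on morphisms realizes the desired quasi-equivalence $\mathcal{R} \simeq Q_1/Cone(e)$.
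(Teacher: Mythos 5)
Your overall architecture matches the paper's: construct a dg-functor $F$ with $F(P)$ a line-bundle object, observe that $e$ becomes invertible so $P_1\cong P_2$ and $Cone(e)\cong 0$ in the quotient, and reduce the lemma to proving that $B=End_{Q_1/Cone(e)}(P_i)$ is concentrated in degree $0$ with $H^0(B)\cong T(V')=\mathbf{C}\langle x,y\rangle$. You also correctly see that the slot ``$e^{-1}e$'' is killed by the boundary $du=\mathrm{id}$ (in the paper's notation, $d(j\eta i)=e\eta i - 1$), so that $e$ contributes no new generator. So far so good.

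The gap is that the two claims you need most --- ``$H^0(B)$ is freely generated by $x,y$ with no relations'' and ``$H^{-k}(B)=0$ for $k>0$'' --- are asserted but not actually argued. The degree-$0$ part of $B$ is the full tensor algebra $T(V)$ on the three-dimensional space $V$, and the assertion is that modulo boundaries this collapses exactly to $T(V')$ with no further identifications coming from the degree-$(-1)$ part of $B$; that is not automatic, since noncommutative (Ore-type) localizations can and do create extra relations, and the appeal to ``the Kronecker quiver has no relations'' does not settle it. Likewise, ``a Koszul-like contracting homotopy from $du=\mathrm{id}$'' cannot literally be a contracting homotopy (that would make the whole complex acyclic), so it must mean a homotopy that retracts $B$ onto $T(V')$, and you have not written it down. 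The paper handles both points simultaneously by an explicit filtration on the number of occurrences of $\eta$: it splits $End_{Q_1}(Cone(e))=R^{\bullet}_0\oplus R'$ with $R^{\bullet}_0=\begin{pmatrix}k&ke\\0&k\end{pmatrix}$ quasi-isomorphic to $k\cdot\mathrm{Id}$, decomposes the bar-like complex computing $B$ into tensor blocks of types $C_0\otimes T_{\eta}(R^{\bullet}_0)\otimes R'$ and $R'\otimes T_{\eta}(R^{\bullet}_0)\otimes R'$, kills the first type by acyclicity of $C_0$, and evaluates the second via the induced differential $d\eta=1$ on $T_{\eta}(\mathrm{Id})$. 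That spectral-sequence computation is the substantive content of the lemma; your proposal names the obstacle but does not overcome it. One smaller point: ``apply the recipe of the preceding lemma'' is not quite available, since that lemma treats $Cone(a)$ of an \emph{endomorphism} $a\in A^0$ of a single object, whereas here $e:P_1\to P_2$ joins two distinct objects; only the general Drinfeld quotient machinery transfers, not the particular formulas for $t,x,y,z$.
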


\textbf{Corollary.} Proposition $2.5$\cite{Drinfeld04} then imply
that $\mathcal{R}^{pre-tr}$ and $(Q_1/Cone(e))^{pre-tr}$ are
quasi-equivalent, which means that $D^b(R-mod)$ and
$(Q_1/Cone(e))^{tr}$ are equivalent as triangulated categories.\\
\begin{proof}
    We understand $Cone(e)$ as a $P_2\oplus P_1[1]$ with a
differential $e:P_1\rightarrow P_2$ and symbolically write it as a
column $\begin{pmatrix}P_2\\P_1[1]\end{pmatrix}$, so the
endomorphism dg-algebra $End_{Q_1}(Cone(e))$ looks like
\[\begin{pmatrix}Hom(P_2,P_2)&&
Hom(P_1,P_2)[-1]\\Hom(P_2,P_1)[1]&&Hom(P_1,P_1)\end{pmatrix}=\begin{pmatrix}k && V[-1]\\
0 && k\end{pmatrix}\]
 $k$ means $1$-dimensional vector spaces of degree $0$, and
$V=Hom(P_1,P_2)$. The differential is the following:
$d:\begin{pmatrix}x&&v\\0&&y\end{pmatrix}\mapsto\begin{pmatrix}0&&(x-y)e\\0&&0\end{pmatrix}$.
The multiplication is coming from the matrix multiplication.
Denote this dg-algebra by $R^{\bullet}$.\\

    Choose a $2$-dimensional vector subspace $V'\subset V$, such that
$V'\oplus ke=V$ and let $a,b$ be it's basis. By construction of
\cite{Drinfeld04} dg-category $Q_1/Cone(e)$ is obtained from $Q_1$
by adding a morphism $\eta\in Hom^{-1}(Cone(e),Cone(e))$ such that
$d\eta=id_{Cone(e)}$. We also have
\[Hom_{Q_1}(P_2,Cone(e))=\begin{pmatrix}k\\0\end{pmatrix}\]
\[Hom_{Q_1}(Cone(e),P_2)=\begin{pmatrix}k&&V[-1]\end{pmatrix}\]
They are respectively left and right $R^{\bullet}$-dg-modules.
\[Hom_{Q_1/Cone(e)}(P_2,P_2)=\bigoplus_{n=0}^{\infty}
Hom_{(n)}(P_2,P_2)\]
\[Hom_{(n)}(P_2,P_2)=Hom_{Q_1}(Cone(e),P_2)\otimes\eta\otimes
R^{\bullet}\otimes\eta\otimes ... R^{\bullet}\otimes\eta\otimes
Hom_{Q_1}(P_2,Cone(e)) \]\label{hom_quotient}

$n$ is a number of times that $\eta$ shows up. In particular we
see that this is a dg-algebra concentrated in non positive
degrees. The degree $0$ part is:
\[k\oplus V\eta k\oplus V\eta V\eta k\oplus ...=T(V)\]
    $T(V)$ is a tensor algebra of $V$.\\
    This allows us to construct a DG functor $F:\mathcal{R}\rightarrow
Q_1/Cone(e)$. We define $F(P):=P_2$, and $F(x)=a\eta i$,
$F(y)=b\eta i$, where $i$ is a generator of
$Hom_{Q_1}(P_2,Cone(e))$. Remind that
$Hom_{\mathcal{R}}(P,P)=R=\mathbf{C}<x,y>=T(W)$, where $W$ is a
vector space spanned by $x,y$. So $F$ sends $T(W)$ to
$T(V')\subset T(V)=Hom_{Q_1/Cone(e)}(P_2,P_2)$.\\
    In the dg-category $Q_1/Cone(e)$ objects $P_1$ and $P_2$ are
homotopy equivalent, because $e\in Hom^0_{Q_1/Cone(e)}(P_2,P_1)$
and $j\eta i\in Hom^0_{Q_1/Cone(e)}(P_1,P_2)$ are inverse of each
other up to a homotopy($j$ is a generator of
$Hom_{Q_1}(Cone(e),P_1)$). So the functor $Ho(F)$ is essentially
surjective and to conclude the lemma we need just to prove that
$F$ induces a quasi-isomorphism between $End_{\mathcal{R}}(P)$ and
$End_{Q_1/Cone(e)}(P_2)$, in other words we have to prove that the
cohomology of the latter complex
is exactly $T(V')$.\\
    Introduce a dg-subalgebra $R^{\bullet}_0\subset R^{\bullet}$
consisting of $\begin{pmatrix}k&& ke\\0&&k\end{pmatrix}$ and let
$R'\subset R^{\bullet}$ be a submodule
$\begin{pmatrix}0&&V'\\0&&0\end{pmatrix}$, so that we have a
decomposition $R^{\bullet}=R^{\bullet}_0\oplus R'$. Introduce a
dg-subalgebra $T_{\eta}(R^{\bullet}_0)\subset
Hom_{Q_1/Cone(e)}(P_2,P_2)$ by the following formula:
\[T_{\eta}(R^{\bullet}_0)=\bigoplus_{n=0}^{\infty}\eta\otimes R^{\bullet}_0\otimes\eta...R^{\bullet}_0\otimes\eta\]
$n$ is the the number of times the $R^{\bullet}_0$ shows up.\\
The cohomology of $R^{\bullet}_0$ is one dimensional and generated
by the identity matrix $Id$. Let us introduce a subspace
$T_{\eta}(Id)\subset T_{\eta}(R^{\bullet}_0)$ defined as follows:
\[T_{\eta}(Id)=k\eta\oplus k\eta\otimes Id\otimes\eta\oplus k\eta\otimes Id\otimes\eta\otimes Id\otimes\eta\oplus...\]
    Let $C_0=\begin{pmatrix}k&& ke\end{pmatrix}$, so that we have decompositions:
\[Hom_{Q_1}(Cone(e),P_2)=C_0\oplus R'\]
\[R^{\bullet}=R^{\bullet}_0\oplus R'\]
Substituting this to the formula(\ref{hom_quotient}) we get
\[Hom_{Q_1/Cone(e)}(P_2,P_2)=\bigoplus_{m=0}^{\infty}Hom'_{m}(P_2,P_2)\]
\[Hom'_{m}(P_2,P_2)=(R'\oplus C_0)\otimes T_{\eta}(R^{\bullet}_0)\otimes R'\otimes T_{\eta}(R^{\bullet}_0)\otimes...\otimes R'\otimes T_{\eta}(R^{\bullet}_0)\otimes ki\]
Or in a more concise way
\[Hom_{Q_1/Cone(e)}(P_2,P_2)=(R'\oplus C_0)\otimes T_{\eta}(R^{\bullet}_0)\otimes T(R'\otimes T_{\eta}(R^{\bullet}_0))\otimes ki\]
Observe here that as a complex it is a direct sum of tensor
products of direct summands of complexes of the type $C_0\otimes
T_{\eta}(R^{\bullet}_0)\otimes R'$ and $R'\otimes
T_{\eta}(R^{\bullet}_0)\otimes R'$. There is a well-defined
filtration on the number of appearances of $\eta$, so we can
compute the cohomology of these complexes using the spectral
sequence with respect to these filtration. First term of the first
spectral sequence is $H^{\bullet}(C_0)\otimes
T(H^*(R^{\bullet}))\otimes R'$, $H^*$ - is cohomology. $C_0$ is
acyclic, so $H^*(C_0)=0$ and $C_0\otimes
T_{\eta}(R^{\bullet}_0)\otimes R'$ is acyclic. First term of the
second spectral sequence is $R'\otimes T_{\eta}{Id}\otimes R'$ and
the differential is given by $d(\eta)=1$. $\eta$ is odd, so the
cohomology of this complex is $R'\eta R'$. We deduce from here
that the cohomology of $Hom_{Q_1/Cone(e)}(P_2,P_2)$ is equal to
$\bigoplus_{k=0}^{\infty}R'\eta R'\eta...R'\eta ki=T(R')=T(V')$.
It's exactly what we needed to prove.

\end{proof}

    Next we have to quotient the triangulated category $D^b(R-mod)$ by
the elements $Cone(M)$, where $M:R^{\oplus k}\rightarrow R^{\oplus
k}$ is given by the matrices with entries in vector space
$V=<1,x,y>$ and non-zero commutative determinant, because this
objects generate the image of category $\widetilde{D}^1$ in the
quotient $\widetilde{D}/Cone(e)$.\\
 Let us proceed by induction on the size of matrices $M$: denote $Mat_k$ a set of $k\times k$
matrices with coefficients in a $3$-dimensional vector space
$V=<1,x,y>$ with non-negative commutative determinant. Actually we
want to invert these matrices. On each step we would obtain a
dg-algebra $A^{\bullet}_k$ concentrated in non-positive degrees,
such there the cohomology of this algebra would be the
endomorphisms of the object $P$ in a quotient category
$\mathcal{A}^{tr}/<Cone(M)|M\in Mat_k>$.\\

\begin{theorem}. There exist dg-algebras $A^{\bullet}_k$,
$k=0,1,2,..$ \label{main_thm}
concentrated in non-positive degrees, such that:\\
$1$. $A^{\bullet}_0=R=\mathbf{C}<x,y>$ in degree $0$;\\
$2$. For a triangulated category $D_k=D^b(R-mod)/{Cone(M)| M\in Mat_k}$ we have $RHom_{D_k}(P,P)\cong A^{\bullet}_k$;\\
$3$. $H^0(A_{k+1})$ is a localization of $H^0(A_k)$ at some set outside the commutator ideal;\\
$4$. All matrices in $Mat_k$ are invertible over
$H^0(A_k)$\end{theorem}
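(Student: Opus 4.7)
The plan is induction on $k$. For the base case $k=0$, set $A^\bullet_0 := R = \mathbf{C}<x,y>$ concentrated in degree $0$; since $R$ is a free algebra (hence hereditary), $\mathcal{A}_0^{tr} = D^b(R-mod)$ and $RHom_{D_0}(P,P) = R$. The set $Mat_0$ is empty, so properties 1--4 hold trivially.

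For the inductive step, view $A^\bullet_k$ as the dg-category $\mathcal{A}_k$ with one object $P$ and $End^\bullet(P) = A^\bullet_k$. Define $A^\bullet_{k+1}$ as the endomorphism dg-algebra of $P$ in the Drinfeld dg-quotient of $\mathcal{A}_k^{pre-tr}$ by the full subcategory generated by $\{Cone(M): M\in Mat_{k+1}\}$; property 2 then holds by construction. The substance of the theorem lies in properties 3 and 4, for which I would apply the second localization lemma to each $M\in Mat_{k+1}$ in turn (the successive quotienting being iterable, since after each step the resulting category is still a dg-category with one object $P$).

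The key reduction is this: given $M\in Mat_{k+1}$, present $Cone(M)$ up to isomorphism in $\mathcal{A}_k^{tr}$ as $Cone(\mathrm{diag}(1_{k\times k}, a)+dM_{-1})$ for some $a\in A^{0}_k$ outside the commutator ideal. Since $\det(M)\in\mathbf{C}[x,y]$ is nonzero, cofactor expansion along some row shows that at least one $k\times k$ minor $M'$ of $M$ has $\det(M')\neq 0$; by the inductive hypothesis (property 4), $M'\in Mat_k$ is invertible over $H^0(A^\bullet_k)$. After row/column permutations (which preserve the cone isomorphism class) assume $M'$ is the upper-left $k\times k$ block, and then Gaussian elimination over $H^0(A^\bullet_k)$ using $(M')^{-1}$ produces invertible matrices $U,V$ over $H^0(A^\bullet_k)$ with $UMV=\mathrm{diag}(1_{k\times k},a)$, where $a\in H^0(A^\bullet_k)$ has commutative image $\det(M)/\det(M')\neq 0$, so $a$ lies outside the commutator ideal. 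Lifting $U,V,a$ arbitrarily to degree-$0$ elements $\tilde U,\tilde V,\tilde a$ of $A^\bullet_k$, both sides of $\tilde U M\tilde V=\mathrm{diag}(1_{k\times k},\tilde a)+dM_{-1}$ agree in $H^0$, so some $M_{-1}\in A^{-1}_k$ realizes the equality. Because $\tilde U,\tilde V$ are invertible modulo boundaries, the cones $Cone(M)$ and $Cone(\tilde U M\tilde V)$ are isomorphic in $\mathcal{A}_k^{tr}$, so the second localization lemma applies and yields an endomorphism dg-algebra concentrated in non-positive degrees with $H^0$ equal to $H^0(A^\bullet_k)[\tilde a^{-1}]$. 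Iterating over all $M\in Mat_{k+1}$ gives property 3, and property 4 at level $k+1$ follows from $UMV=\mathrm{diag}(1_{k\times k},a)$ once $a$ is inverted.

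The main obstacle is controlling the combined dg-quotient by the infinite family $\{Cone(M):M\in Mat_{k+1}\}$: one must verify that the localizations at the various $\tilde a_M$ can be performed in any order without destroying earlier invertibilities and without introducing positive-degree cohomology, and that the cone isomorphism $Cone(M)\simeq Cone(\tilde U M\tilde V)$ in $\mathcal{A}_k^{tr}$ descends compatibly to Verdier quotients. Careful degree bookkeeping of the correction $M_{-1}$ produced in the lift-and-correct step is needed to ensure the second localization lemma is applicable in the stated form.
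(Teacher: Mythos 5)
Your proposal is correct and follows essentially the same route as the paper: pick a $k\times k$ minor $M'$ with nonzero commutative determinant, invert it over $H^0(A^{\bullet}_k)$ by the inductive hypothesis, reduce $M$ by left/right multiplication to $\mathrm{diag}(1_{k\times k},q)+d(\text{correction})$ with $q$ commutativizing to $\det(M)/\det(M')$, and apply the second localization lemma. The obstacle you flag about the infinite family is handled in the paper by invoking Drinfeld's dg-quotient, which expresses $A^{\bullet}_{k+1}$ as the direct limit $\underrightarrow{\lim}_M B_M$ over the individual one-cone quotients, so no ordering issue arises.
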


\textbf{Corollary $1$}. This procedure gives us a dg-algebra
$A^{\bullet}=\underrightarrow{\lim}A_k$, such that for the
triangulated category
$\widetilde{C}=\widetilde{D}/\widetilde{D^1}$ we have an
equivalence
$RHom_{\widetilde{C}}(P,P)\cong A^{\bullet}$.\\

\textbf{Corollary $2$}. $H=\underrightarrow{\lim}H^0(A_k)$ is a
non-commutative algebra and may be described as a consecutive
localization of $R$ at some sets, which are the elements not lying
in the commutator ideal.\\

\begin{proof} We proceed by induction: let's
put $A^{\bullet}_0=R$ concentrated in degree $0$. For $k=0$ there
is no statement to prove. Let us prove the theorem for $k+1$: take
a matrix $M\in Mat_{k+1}$, as it has non-trivial commutative
determinant, we can find sub-matrix $M'$ of size $k\times k$ that
would also have non-trivial determinant. This matrix would be
invertible over $H^0(A^{\bullet}_k)$, so let $M'_1$ be the lift of
it's inverse over $A^{\bullet}_k$. We have $M'M'_1=1+dM_2$ over
$A^{\bullet}_k$. So if matrix $M$ looks like $\begin{pmatrix}M' && b\\
a && c\end{pmatrix}$, then we would
multiply it from the left by matrix $\begin{pmatrix}M'_1 && 0\\
-aM'_1 && 1\end{pmatrix}$ and from the right by matrix
$\begin{pmatrix}1 && -M'_1 b\\ 0 && 1\end{pmatrix}$. So the
resulting matrix would be $M_0=\begin{pmatrix}1_{k\times k}&& 0 \\
0 && q\end{pmatrix}+ dM_3$, where $q=-aM'_1 b+c\in A^{\bullet}_k$
and $M_3$ is some $(k+1)\times(k+1)$ matrix of degree $-1$ defined
over $A^{\bullet}_k$.\\
    Now we can apply the previous lemma, which would say that the
quotient of $(\mathcal{A^{\bullet}}_k)^{pre-tr}$ by $Cone(M)$ is
dg-equivalent to $\mathcal{B_M}^{pre-tr}$, where $B_M$ is some
dg-algebra, concentrated in non-positive degrees and $H^0(B)$ is a
localization of $H^0(A)$ at $q$. When we quotient by a set of
matrices, the procedure of \cite{Drinfeld04} takes the direct
limit of dg-algebras, so we would obtain
$A^{\bullet}_{k+1}=\underrightarrow{\lim}_M B_M$ and
$H^0(A^{\bullet}_{k+1})$ would be just a localization of
$H^0(A^{\bullet}_k)$ at the set of elements obtained as $q$ in the
beginning of the proof. After the commutativization $q$ goes to
the determinant of the matrix $M$ divided by the determinant of
the matrix $M'$, so definitely it is non-zero, so it lies outside
the commutator ideal.\end{proof}

    As a consequence of the theorem we obtain a non-commutative
algebra
$H=H^0(A^{\bullet})=RHom^0_{\widetilde{C}(\mathbf{CP}^2)}(P,P)$
and it is a consecutive localization of $R=\mathbf{C}<x,y>$ at
some sets outside the commutator ideal. Although that is enough to
get the action of the Cremona group on the algebra $A$, because we
have an algebra morphism $H\rightarrow A$, we can prove the more
precise result:
\begin{lemma}\label{end_algebra}
    The map $H=H^{0}(A^{\bullet})\rightarrow A$ is an isomorphism.
\end{lemma}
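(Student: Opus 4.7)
I would construct mutually inverse algebra homomorphisms $\psi\colon H\to A$ and $\phi\colon A\to H$; that both compositions are the identity will then follow because both fix the generators $x,y$, and both algebras are generated over $R$ by $x,y$ together with the iterated inverses supplied by their defining constructions.

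The forward map $\psi\colon H\to A$ is built inductively along the tower $H^0(A^\bullet_0)\to H^0(A^\bullet_1)\to\cdots$, starting with the tautological inclusion $R=H^0(A^\bullet_0)\hookrightarrow A$. Assuming $H^0(A^\bullet_k)\to A$ is defined, Theorem~\ref{main_thm} identifies $H^0(A^\bullet_{k+1})$ as a localization of $H^0(A^\bullet_k)$ at the elements $q=-aM'_1 b+c$ arising from matrices $M\in \mathrm{Mat}_{k+1}$. Each such $q$ commutativizes to a nonzero element of $K$ (its image is a ratio of nonvanishing subdeterminants) and hence lies outside the commutator ideal of $A$, so is invertible in $A$ by property~(3) of Section~\ref{construction_A}. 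This extends the map to $H^0(A^\bullet_{k+1})\to A$; the colimit is $\psi$.

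The reverse map $\phi\colon A\to H$ is built using the universal property of $A$ recorded in the second lemma of Section~\ref{construction_A}: sending $x,y$ to their counterparts in $H$ extends to an algebra map out of $A$ as soon as one shows inductively in $i$ that every element of $A_i\setminus I_i$ maps to an invertible element of $H$. The base case asks that every nonzero polynomial $p(x,y)\in R$ be invertible in $H$; one realizes $p$ as a factor of the commutative determinant of some matrix with entries in $V=\langle 1,x,y\rangle$, then inverts this matrix in $H$ (together with an auxiliary linear factor inverted by the $1\times 1$ case). The inductive step proceeds analogously: elements of $A_i\setminus I_i$ are rational expressions in previously inverted quantities, and the matrix-localization machinery of $H$ is closed under the same operations, because inverting a matrix whose $V$-determinant captures the commutativization of the element in question produces, at some finite stage, an inverse in $H$ that maps to the desired inverse in $A$.

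The chief obstacle is this second direction, specifically the inductive step for $\phi$: one must check that every noncommutative element of $A$ whose commutativization is nonzero arises in $H$ by inverting an appropriate matrix with $V$-entries (possibly at a later stage). This is the non-trivial compatibility between the Ore-type element-localization defining $A$ and the Cohn-type matrix-localization defining $H$, and relies on the classical fact that any nonzero polynomial on $\mathbf{CP}^2$ divides the determinant of a matrix of linear forms. Once this is established, mutual inversion of $\psi$ and $\phi$ is automatic from their agreement on the generators $x,y$, completing the proof.
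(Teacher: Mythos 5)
Your forward map $\psi\colon H\to A$ is fine and is exactly what the paper already records before the lemma; the real content is the other direction, and there your proposal has a genuine gap. Inverting a matrix $M\in Mat_{k+1}$ in the construction of $H$ only makes one specific element invertible: the Schur-complement pivot $q=-aM_1'b+c$ (the $\Delta$ of the paper), whose commutativization is $\det M/\det M'$. Knowing that a noncommutative polynomial $p$ has $comm(p)$ equal to (or a factor of) the determinant of some matrix of linear forms therefore tells you nothing about $p$ itself as an element of $H$: $p$ and $\Delta$ agree only modulo the commutator ideal, and two elements of $H$ with the same commutativization are not automatically simultaneously invertible --- if they were, the lemma would be immediate and no argument would be needed. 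So the sentence ``realize $p$ as a factor of the commutative determinant of some matrix with entries in $V$, then invert this matrix in $H$'' does not produce an inverse of $p$. The problem is worse at the inductive step: elements of $A_i\setminus I_i$ for $i\geq 1$ involve already-adjoined inverses and are not polynomials at all, so the classical determinantal-representation fact you invoke does not even apply to them, and ``the matrix-localization machinery of $H$ is closed under the same operations'' is precisely the assertion that has to be proved, not a justification.

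What actually fills this hole in the paper is Lemma \ref{S_is_big}: by explicit block-matrix constructions one shows that the set $S$ of pivots already inverted in $H$ (which contains all nonzero elements of $V=\langle 1,x,y\rangle$) is closed under inverses, under products, and under sums whose commutativization is nonzero. Since every element of $H$ can be written as a finite sum of products of elements of $V$ and of inverses of elements of $S$, and the summands can be ordered so that all partial sums have nonzero commutativization, it follows that \emph{every} element of $H$ outside $\ker(comm)$ is invertible in $H$; only then does the universal-property argument you outline go through, identifying $H$ with the consecutive localization defining $A$. Your write-up correctly locates the obstacle but supplies no substitute for these closure statements, so as it stands the proof of the hard direction is missing its key step.
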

\begin{proof}
    We can summarize the procedure of the theorem as follows:
consider a matrix $M\in M_k(V)$ with values in a vector space
$V=<1,x,y>$ and non-zero commutative determinant. Remind that $M$
is invertible over $H_k$, or more precisely up to permuting the
columns of $M$ we can represent it as a product $M=UT$, where $U$
is a lower triangular matrix with coefficients in $H_{k-1}$ and
it's diagonal elements are invertible over $H_{k-1}$ and the last
diagonal element is $1$, $T$ is an upper triangular matrix with
coefficients in $H_{k-1}$. The elements on the diagonal of $T$ are
$(1,1,..,1,\Delta)$. As proved in the theorem, $\Delta$ is
invertible in $H_k$. Let $S$ denote a subset of elements of $H$,
consisting of images of elements $\Delta$ in $H_k$ for all
matrices $M$ with non-zero commutative determinant and of any size
$k$. In particular $S$ contains non-zero elements of $V$ for
$k=1$.\\
    Let $comm:H\rightarrow K=\mathbf{C}(x,y)$ be a natural morphism
induced from the map $R=\mathbf{C}<x,y>\rightarrow K$. We will
prove that $S$ consists of all the elements of $H$ that are not
in the kernel of $comm$.\\

\begin{lemma}\label{S_is_big}
$1$. $\Delta\in S$ implies $\Delta^{-1}\in S$.\\
$2$. $\Delta_1,\Delta_2\in S$ implies $\Delta_1\Delta_2\in S$.\\
$3$. If $\Delta_1,\Delta_2\in S$ and $comm(\Delta_1+\Delta_2)\ne
0$ then $\Delta_1+\Delta_2\in S$.
\end{lemma}
\begin{proof}
    Notice first that if we have a matrix $M\in Mat_{k\times
    k}(V)$ and an equality $UM=T$, where $U$ is lower triangular
and $T$ is upper triangular with diagonal elements
$(1,..,1,\Delta)$, then for any $\lambda\in\mathbf{C}^*$ let us
consider a diagonal matrix $D=diag(1,..,1,\lambda)$. Then $MD$ has
also coefficients in $V$ and non-zero determinant and U(MD)=(TD),
where $(TD)$ has elements $(1,..,1,\lambda\Delta)$ on diagonal.
This implies that $\lambda\Delta\in S$.\\
    Given $M$ and decomposition $UM=T$, let us consider a matrix
$M'=\begin{pmatrix}M && e(k)\\ e(k)^t && 1 \end{pmatrix}$, where
$e(k)$ is a $k\times 1$ matrix with $e(k)_{i1}=0$ for all $i$
except $e(k)_{k1}=1$. Clearly $M'$ has all coefficients in $V$.
Let $\Delta_M$ be the last diagonal element of $T$ and consider
the lower-triangular matrix $U'$ such that it's non-zero entries
are $u'_{ii}=1$ for $i=1..k-1$, $u'_{kk}=\Delta^{-1}_M$,
$u'_{k+1k}=-1$, $u'_{k+1k+1}=1$. Then we have a decomposition
$(U'U)M'=T'$, here $U$ is extended to a $(k+1)\times(k+1)$ matrix
by putting $u_{k+1k+1}=1$. So we have that $T'$ has all diagonal
elements equal to $1$ except $t'_{k+1k+1}=\Delta_M^{-1}$. This
proves that if $\Delta\in S$ then $\Delta^{-1}\in S$, which is the
assertion $(1)$ of the lemma.\\
    Suppose we have $UM=T$ and $WN=Q$, and $t_{kk}=\Delta_1$,
$q_{nn}=\Delta_2$. Let $M_0$ be the first $k-1$ columns of $M$ and
$M_1$ be the last column of $M$, $N_0$ - first $(l-1)$ columns of
$N$ and $N_1$ the last column of $N$. Let $O(k,l)$ be a $k\times
l$ matrix consisting of zero's and $e(l)$ be a $l\times 1$ vector
which is zero except for $e(l)_{l1}=1$. Introduce matrices
\[P=\begin{pmatrix}M_0&& e(k)&& O(k,l-1)&&M_1\\O(l,k-1)&&N_1&&N_0&&O(l,1)\end{pmatrix}\]
\[U'=\begin{pmatrix}U&&O(k,l)\\O(l,k)&&W\end{pmatrix}\]
Then we have
\[U'P=\begin{pmatrix}UM_0&&Ue(k)&&O(k,l-1)&&UM_1\\O(l,k-1)&&WN_1&&WN_0&&O(l,1)\end{pmatrix}\]
Notice that $Ue(k)=e(k)$ because $U$ is lower triangular with the
last diagonal entry equal to $1$. Moreover the bottom entry of
$WN_1$ equals to $\Delta_2$ and the bottom entry of $UM_1$ is
$\Delta_1$. Consider now a $l\times k$ matrix
$W'=\begin{pmatrix}O(l,k-1)&& -WN_1\end{pmatrix}$ and a
$(k+l)\times(k+l)$ matrix
\[U''=\begin{pmatrix}1_{k\times k}&&O(k,l)\\W'&&1_{l\times l}\end{pmatrix}\]

$W'UM_0=0$ and the bottom entry of $W'UM_1$ is
$-\Delta_2\Delta_1$. So we see that $U''U'P$ is an
upper-triangular matrix with all the diagonal entries equal to $1$
except for the last one which is $-\Delta_2\Delta_1$. The matrix
$P$ has coefficients in vector space $V=<1,x,y>$ and $U''U'$ is
lower triangular with $1$'s on the diagonal. So it implies that
$-\Delta_2\Delta_1\in S$, which
proves the second statement of the lemma.\\

    To prove the last statement, for a $\Delta_1\in S$ we find the
decomposition $UM=T$ where $t_{kk}=-\Delta_1\in S$. Let also
$WN=Q$ with $q_{ll}=\Delta_2$. The notations for
$M_0,M_1,N_0,N_1,O(k,l),e(k)$ are as previously. Consider matrices
\[P=\begin{pmatrix}M_0&&e(k)&&O(k,l-1)&&M_1\\O(l,k-1)&&e(l)&&N_0&&N_1\end{pmatrix}\]
\[U'=\begin{pmatrix}U&&O(k,l)\\O(l,k)&&W\end{pmatrix}\]
\[U'P=\begin{pmatrix}UM_0&&e(k)&&O(k,l-1)&&UM_1\\O(l,k-1)&&e(l)&&WN_0&&WN_1\end{pmatrix}\]
The matrix $U'P$ is upper-triangular except for one entry
$(U'P)_{k+l,k}=1$. Let us multiply this matrix to the left by
$U''$ such that $u''_{ii}=1$ for all $i$, $u''_{k+l,k}=-1$ and
other entries are zero. Then $U''U'P$ would be upper triangular
with the last diagonal entry equal to $\Delta_1+\Delta_2$.
\end{proof}

    Remind that $H$ is constructed as the consecutive localization
at the elements coming from determinants. It follows from
lemma(\ref{S_is_big}) that any element outside of the kernel of
the map $comm:H\rightarrow K$ is invertible. So we invert all the
elements outside of the commutator ideal, which is a construction
of $A$. So we proved the lemma.
\end{proof}

\subsection{Action of the Cremona group on $\widetilde{C}$}

    Suppose that $f\in Cr$ is an element of the Cremona group.
There exists a diagram
\begin{align*}
\xymatrix{ X \ar[r]^g \ar[d]_{\pi_X} & Y \ar[d]^{\pi_Y} \\
\mathbf{CP}^2 \ar[r]_{f} & \mathbf{CP}^2}
\end{align*}
$\pi_X,\pi_Y$ - are sequences of blow-ups and $g$ is a regular
iso-morphism.  By Theorem\ref{bir_invariant} $(L\pi_X^*)^{-1}\circ
Lg^*\circ L\pi_Y^*$ is a composition of equivalences of
triangulated categories, so it defines an auto-equivalence
$\psi_f$ of $\widetilde{C}=\widetilde{D}/\widetilde{D}^1$.
Moreover the composition of such auto-equivalences
$\psi_f\circ\psi_g$ is non-canonically isomorphic to $\psi_{f\circ
g}$, so we have a well-defined \underline{weak} action of the
Cremona group on $\widetilde{C}$.\\

Notice that $ Lg^*\circ
L\pi_Y^*(O_{\mathbf{CP}^2}(1))=g^*(\pi_Y(O_{\mathbf{CP}^2}(1)))$
is a line bundle on $X$ left orthogonal to $O_X$.\\

\begin{lemma} The isomorphism class of an object $P=O(1)\in\widetilde{C}$ is
preserved by the action of the Cremona group.\end{lemma}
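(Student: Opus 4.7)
The plan is to unwind the construction of $\psi_f$ so that the statement becomes a concrete claim about line bundles on $X$. By definition $\psi_f=(L\pi_X^{*})^{-1}\circ Lg^{*}\circ L\pi_Y^{*}$, so the assertion is equivalent to showing that the two line bundles
\[
L_1:=L\pi_X^{*}O(1), \qquad L_2:=Lg^{*}L\pi_Y^{*}O(1)
\]
on $X$ become isomorphic in $\widetilde{C}(X)=\widetilde{D}(X)/\widetilde{D}^1(X)$. Both lie in $\widetilde{D}(X)$, as noted in the paragraph preceding the lemma. Moreover, the two regular morphisms $\pi_X$ and $\pi_Y\circ g$ from $X$ to $\mathbf{CP}^2$ coincide on the dense open complement of the union of their exceptional loci (both resolving the rational map $f\circ\pi_X$), so $L_1$ and $L_2$ are canonically identified there, and hence $L_1\otimes L_2^{-1}\cong O_X(D)$ for some divisor $D$ supported on exceptional rational curves.

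Next I would interpolate between $L_1$ and $L_2$ by a finite chain of line bundles $L_1=M_0, M_1, \ldots, M_k=L_2$, each in $\widetilde{D}(X)$, where consecutive bundles differ by a single exceptional component: $M_{j+1}=M_j\otimes O_X(\varepsilon_j E_{i_j})$ with $\varepsilon_j=\pm 1$. For consecutive $M_j$ and $M_{j+1}=M_j\otimes O_X(E_{i_j})$, multiplication by the tautological section of $O_X(E_{i_j})$ yields the exact triangle
\[
M_j \longrightarrow M_{j+1} \longrightarrow M_{j+1}\vert_{E_{i_j}},
\]
whose cone is a coherent sheaf on $E_{i_j}\cong\mathbf{P}^1$, with support of dimension at most one. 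Since both $M_j$ and $M_{j+1}$ lie in the triangulated subcategory $\widetilde{D}(X)$, the cone automatically belongs to $\widetilde{D}(X)$, and together with the support bound this places it in $\widetilde{D}^1(X)$. Hence $M_j\cong M_{j+1}$ in $\widetilde{C}(X)$, and composing these isomorphisms gives $L_1\cong L_2$.

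The main obstacle is the construction of the interpolating chain entirely within $\widetilde{D}(X)$: arbitrary twists by exceptional divisors generally destroy left orthogonality to $O_X$, so the ordering and direction of the twists must be chosen carefully. The cleanest route, I expect, is to invoke the weak factorization theorem of \cite{AKMW} (already used in the proof of Theorem \ref{bir_invariant}) to factor $f$ into elementary birational steps, each a single blow-up or blow-down at a smooth (hence zero-dimensional) centre. Each elementary step introduces or removes a single exceptional $\mathbf{P}^1$ of known normal bundle $O_{\mathbf{P}^1}(-1)$, reducing the verification of left orthogonality along the chain to explicit cohomological computations via the projection formula; composing the preserved-$P$ property across all elementary steps then yields the lemma.
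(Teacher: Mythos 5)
Your proposal contains a genuine error at the first step. You assert that the two regular morphisms $\pi_X$ and $\pi_Y\circ g$ from $X$ to $\mathbf{CP}^2$ coincide on a dense open set, so that $L_1\otimes L_2^{-1}$ is supported on exceptional curves. This is false: the defining commutative square gives $\pi_Y\circ g=f\circ\pi_X$, and since the Cremona transformation $f$ is not the identity, the two morphisms differ by $f$ everywhere, not just on the exceptional locus. Concretely, for the standard quadratic transformation one has $L_1=\pi_X^*O(1)=O_X(H)$ and $L_2=Lg^*\pi_Y^*O(1)=O_X(2H-E_1-E_2-E_3)$, so $L_1^{-1}\otimes L_2=O_X(H-E_1-E_2-E_3)$ involves the pulled-back line class $H$, not merely exceptional curves. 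Your interpolating chain would therefore also have to twist by $H$; this is in fact salvageable in $\widetilde{C}$ (the section $O(H)\to O(2H)$ has cone a curve), but it is not what you wrote. Moreover, the step you yourself flag as the ``main obstacle''---arranging the twists so that every intermediate bundle stays in $\widetilde{D}(X)$---is not resolved: the appeal to weak factorization plus ``explicit cohomological computations via the projection formula'' is a strategy sketch, and the crucial assertion that each elementary blow-up/blow-down preserves the class of $P$ is precisely the content of the lemma, not a reduction of it.

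The paper's proof avoids all of this by using the Noether--Castelnuovo generation of $Cr$ by $PGL(3,\mathbf{C})$ and a single quadratic transformation, and verifies the two cases directly. For a linear automorphism $Lf^*O(1)=O(1)$ and nothing needs to be shown. For the standard quadratic transformation, $X$ is the blow-up of $\mathbf{CP}^2$ at the three coordinate points, and $Lg^*O(1)=O(2H-E)$; the inclusion $O(2H-E)\hookrightarrow O(2H)$ has cone $O(2H)|_E$ supported on a curve, and the section $O(H)\to O(2H)$ has cone a structure sheaf of a line. Since the source and target of each map lie in $\widetilde{D}(X)$, these cones lie in $\widetilde{D}^1(X)$, so both maps become isomorphisms in $\widetilde{C}(X)$, giving $Lg^*O(1)\cong O(2H)\cong O(H)=\pi_X^*O(1)$. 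Checking generators suffices because the weak action composes isomorphism classes. If you want to keep a factorization-based strategy, the chain needs to run over the explicit Cremona generators rather than arbitrary AKMW steps, and the intermediate twists must be organized exactly as in the paper's two-step isomorphism.
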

\begin{proof} As previously we suppose that our birational
automorphism is decomposed into blow-up
$\pi:X\rightarrow\mathbf{CP}^2$ and blow-down
$f:X\rightarrow\mathbf{CP}^2$ and it is given by $f\circ\pi^{-1}$.
So $L=f^*O(1)=Lf^*O(1)$ is a line bundle on $X$, left orthogonal to
$O$. If an automorphism is given by the element of
$PGL(3,\mathbf{C})$ then we don't need blowing up and $f$ is this
regular morphism, so $Lf^*O(1)=O(1)$ and the isomorphism class is
preserved. If automorphism is a quadratic transformation, then $\pi$
is blow-up of $\mathbf{CP}^2$ at three points and $E$ is an
exceptional curve. Then $Lf^*O(1)=O(2-E)$ is a line bundle and
moreover an embedding $O(2-E)\hookrightarrow O(2)$ induces
isomorphism of $Lf^*O(1)$ and $O(2)$ which is in turn isomorphic to
$O(1)$ in $\widetilde{C}(X)$.\end{proof}

\subsection{The coherence of two actions}

Cremona group acts on $\widetilde{C}$ and fixes $P$, so it acts by
outer automorphisms on $H^k(End(P))$. And as we calculated in
Lemma(\ref{end_algebra}) $H^0(End(P))=A$ - an algebra constructed
previously(\ref{construction_A}). This equality is also specified
up to inner conjugation. So we obtain an action of $Cr$ on $A$ by
outer automorphisms.\\

\begin{lemma}. The action of the Cremona group on
$A=H^0(End_{\widetilde{D}/\widetilde{D}^1}(P))$ coincides with the
one constructed by explicit
formulas(\ref{explicit_aut}).\end{lemma}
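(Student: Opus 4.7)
My plan is to reduce the check to a small generating set of $Cr$ on which both actions can be computed explicitly. Both the direct extension from the previous section and the categorical action constructed here are group homomorphisms $Cr\to Out(A)$ lifting the same classical action on $K=A/I$, so it suffices to verify agreement modulo inner automorphisms on any set of generators. I would use the classical generating set consisting of $PGL(3,\mathbf{C})$ together with the standard quadratic transformation $\sigma:(x,y)\mapsto(x^{-1},y^{-1})$.

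For $\phi\in PGL(3,\mathbf{C})$ no blow-up is required: $\phi$ is a regular automorphism of $\mathbf{CP}^2$, so $\psi_\phi=L\phi^*$ acts directly on $\widetilde{C}(\mathbf{CP}^2)$. It preserves $O(1)$ and acts on $Hom(O,O(1))=\langle a,b,e\rangle$ by the standard $PGL(3,\mathbf{C})$-action on global sections. By the computation of $\widetilde{C}(\mathbf{CP}^2)$ leading to Lemma~\ref{end_algebra}, the generators $x,y\in A$ arise, after the quotient by $Cone(e)$, as the compositions of $a$ and $b$ with the homotopy inverse of $e$. The induced action of $\phi$ on $x$ and $y$ is therefore the expected fractional linear formula in $a,b,e$, matching the explicit formulas~(\ref{explicit_aut}) on $\tau:(x,y)\mapsto(y,x)$ and on the subgroups $D_1\times D_2\subset PGL(3,\mathbf{C})$.

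For $\sigma$, I would realize it via the blow-up $\pi:X\to\mathbf{CP}^2$ at the three fundamental points followed by the blow-down $f:X\to\mathbf{CP}^2$ contracting the proper transforms of the three coordinate lines, so that $\psi_\sigma=(L\pi^*)^{-1}\circ Lf^*$. As in the proof that $P$ is preserved, $Lf^*O(1)\cong O(2)\otimes O(-E_1-E_2-E_3)$ embeds in $O(2)$ and becomes isomorphic to $O(1)$ in $\widetilde{C}(X)$. The natural basis of global sections of $Lf^*O(1)$ corresponds to the three quadratic monomials in $a,b,e$ that define $\sigma$, and tracing these back through the equivalence $\widetilde{C}(X)\cong\widetilde{C}(\mathbf{CP}^2)$ of Theorem~\ref{bir_invariant} yields, up to inner conjugation, the formula $x\mapsto x^{-1}$, $y\mapsto y^{-1}$ in $A$, as required.

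The main obstacle is the bookkeeping in the last step: the isomorphism $Lf^*O(1)\cong O(1)$ in $\widetilde{C}(X)$ is canonical only up to an inner automorphism of $A$, so I must check that the resulting ambiguity is compatible with the inner ambiguity already present in the direct extension (the subgroup $Cr^{in}$). I expect this to be routine once the equivalence of Theorem~\ref{bir_invariant} is unwound, since both ambiguities are parametrized by $A^\times$. Together with the check on $PGL(3,\mathbf{C})$ and on $\sigma$, this establishes the lemma.
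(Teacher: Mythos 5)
Your overall strategy is the paper's own: both actions give homomorphisms $Cr\to Out(A)$ lifting the classical action on $K$, so one compares them on $PGL(3,\mathbf{C})$ and on the standard quadratic involution, the latter realized through the blow-up of the three coordinate points with $Lf^*O(1)=O(2-E)$. But as written there are two genuine gaps. First, the linear case: you reduce the check to $\tau$ and $D_1\times D_2$, and this is not a reduction of the $PGL(3,\mathbf{C})$-case. Inside $Cr$ the group $D_1\times D_2$ is not even contained in $PGL(3,\mathbf{C})$ (already $(x,y)\mapsto(x^{-1},y)$ is a quadratic map of $\mathbf{CP}^2$), and the elements of it that are linear, together with $\tau$, generate only a small subgroup of the affine group --- in particular nothing moving the line at infinity. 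Yet those are exactly the linear maps where the statement has noncommutative content: the categorical action comes from a change of basis $(e,a,b)\mapsto(e',a',b')$ of $Hom(O(1),O(2))$ and reads $(x,y)\mapsto(\gamma^{-1}\alpha,\gamma^{-1}\beta)$ with $\gamma=e^{-1}e'$, $\alpha=e^{-1}a'$, $\beta=e^{-1}b'$; when $\gamma$ is nonconstant (e.g.\ the coordinate permutation $(x:y:1)\mapsto(x:1:y)$, which gives $(x,y)\mapsto(y^{-1}x,y^{-1})$) one must still exhibit this as a word in the explicit generators $t_a$, $\tau$ modulo inner automorphisms. The paper does precisely this (it is the composite of $(x,y)\mapsto(y^{-1}x,y)$ and $(x,y)\mapsto(x,y^{-1})$); your proposed check never reaches such elements, so the linear half of the verification is incomplete.

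Second, in the quadratic case the phrase ``tracing these back through the equivalence yields $x\mapsto x^{-1}$, $y\mapsto y^{-1}$'' is the entire difficulty, not a routine step. Knowing that $e,a,b$ pull back to the quadratic monomials $e'=xy$, $a'=yz$, $b'=xz$ in $Hom(O(2-E),O(4-2E))$ does not yet compute the elements $e'^{-1}a'$ and $e'^{-1}b'$ of $H^0(End_{\widetilde{C}}(P))$, which is what the induced map on $x,y\in A$ actually is. The paper's mechanism is an auxiliary line bundle $O(1-E)$, checked to lie in $\widetilde{D}(X)$, together with the morphisms $u_x,u_y,u_z:O(1-E)\to O(2-E)$, the inclusions $i:O(1-E)\hookrightarrow O(1)$ and $j:O(2-E)\hookrightarrow O(2)$, and the relations $j\circ u_x=x\circ i$ (and analogues) and $f\circ i=a'\circ u_x=b'\circ u_y=e'\circ u_z$ with $f=xyz$; inverting these morphisms in the quotient category gives $e'^{-1}a'=j^{-1}\circ zx^{-1}\circ j$ and $e'^{-1}b'=j^{-1}\circ zy^{-1}\circ j$, i.e.\ the claimed action up to conjugation. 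Some computation of this kind has to be supplied; without it, and without the missing linear check above, the lemma is asserted rather than proved. Your closing remark about the $A^{\times}$-ambiguity is fine --- everything is indeed only claimed modulo inner automorphisms, as in the paper --- but it does not substitute for these two computations.
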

\begin{proof} Our
isomorphism between $A$ and
$H^0(End_{\widetilde{D}/\widetilde{D}^1}(P))$ depended on the choice
of the basis of vector space $H^0(\mathbf{CP}^2,O(1))=V=<e,a,b>$. We
used $e$ to identify $O(1)$ and $O(2)$ and we denoted
$e^{-1}a,e^{-1}b$ by $x,y\in End(P)$. Let $e',a',b'$ be another
basis, they are some linear combinations of $e,a,b$, so
$\gamma=e^{-1}e',\alpha=e^{-1}a',\beta=e^{-1}b'$ are some linear
combinations of $1,x,y$. So
$x'=e'^{-1}a'=(e^{-1}e')^{-1}(e^{-1}a')=\gamma^{-1}\alpha$,
$y'=\gamma^{-1}\beta$.\\
    It follows that if we have an automorphism of $\mathbf{CP}^2$ given
by $(x:y:1)\mapsto(\alpha,\beta,\gamma)$, then it would induce an
automorphism of $H^0(End_{\widetilde{D}/\widetilde{D}^1}(P))=A$
given by $(x,y)\mapsto(\gamma^{-1}\alpha,\gamma^{-1}\beta)$. We
should verify that this map is conjugate to the one, given by
explicit formulas.\\
    If $\alpha=ax+by+c$, $\beta=dy+e$, $\gamma=1$, then both maps are
written as $(x,y)\mapsto(ax+by+c,dy+e)$.\\
    If $\alpha=y$, $\beta=x$, $\gamma=1$, then both maps are given by
$(x,y)\mapsto(y,x)$.\\
    If $\alpha=x$, $\beta=1$, $\gamma=y$, then map coming from derived
category is $(x,y)\mapsto(y^{-1}x,y^{-1})$. A map defined by
explicit formulas is a composition of $(x,y)\mapsto(y^{-1}x,y)$
and $(x,y)\mapsto(x,y^{-1})$, which is
$(x,y)\mapsto(y^{-1}x,y^{-1})$, so both maps in question coincide.\\
    This three types of maps generate the action of the group
$GL(3,\mathbf{C})$ on $A$ by outer automorphisms, and we've just
proved that the two actions coincide. The Cremona group is known
to be generated by linear automorphisms and a quadratic
transformation, so we are left to verify that actions are
conjugate for quadratic
transformation $a:(x,y)\mapsto(\frac{1}{x},\frac{1}{y})$.\\
    The explicit formula have the form $a:(x,y)\mapsto(x^{-1},y^{-1})$, because $a\in D_1\times
    D_2$.\\

    As we noticed before, if $f:X\rightarrow Y$ is a blow-up at one
point, then $D(X)$ has a semi-orthogonal decomposition
$D(X)=<Lf^*D(Y),O_E>$, where $E$ is an exceptional curve and $O_E$
is it's structure sheaf.\\

    Let us now consider surface $X$ which is a blow-up of
$\mathbf{CP}^2$ at $3$ points $(0:0:1),(0:1:0),(1:0:0)$ and denote
this map by $\pi$. Let $E$ be an exceptional curve, so it is a
union of three non-intersecting rational curves. Quadratic
transformation $f:(x:y:z)\mapsto (yz,xz,xy)$ is a composition
$g\circ\pi^{-1}$, where $g$ is a regular morphism $g:X\rightarrow\mathbf{CP}^2$.\\
    $Lg^*O(1)=O(2-E)$, $Lg^*O(2)=O(4-2E)$. So the basis elements
$e,a,b\in Hom(O(1),O(2))$ maps by $Lg^*$ to some elements
$e',a',b'\in Hom_X(O(2-E),O(4-2E))$, so to understand the action
of this quadratic transformation on non-commutative algebra $A$ we
need to understand the elements $e'^{-1}a',e'^{-1}b'\in
End_{\widetilde{D}/\widetilde{D}^1}(P)$.\\

    Consider a sheaf $O(1-E)$ on $X$. Let us prove that it is left
orthogonal to $O_X$, $RHom(O(1-E),O)=R\Gamma(O(E-1))$ and we have
an exact sequence of sheaves $0\rightarrow O(-1)\rightarrow
O(-1+E)\rightarrow O(-1+E)_{|E}\rightarrow 0$. Actually $O(-1+E)$
restricted to any component $E_i$ of $E$ is isomorphic to
$O(E_i)_{|E_i}\cong O(-1)_{|\mathbf{CP}^1}$, so it's acyclic.
$O(-1)_X$ is also acyclic, so $R\Gamma(O(-1+E))=0$ which proves that $O(1-E)\in\widetilde{D}(X)$.\\
    Now in $\widetilde{D}(X)$ we have the following maps:\\
\begin{align*}
&u_x,u_y,u_z\in Hom(O(1-E),O(2-E))=Hom(O(1),O(2))\\
&f=xyz\in Hom(O(1),O(4-2E))\\
&a',b',e'\in Hom(O(2-E),O(4-2E))\\
&i:O(1-E)\hookrightarrow O(1), j:O(2-E)\hookrightarrow O(2)
\end{align*}
    By definition: $j\circ u_x=x\circ i$, $j\circ u_y=y\circ i$, $j\circ
u_z=z\circ i$. Also we have the following relations: $f\circ
i=a'\circ u_x=b'\circ u_y=e'\circ u_z$, just because $a'=yz, b'=xz,
c'=xy$ as elements of $Hom(O(2-E),O(4-2E))=\Gamma(O(2-E))$. \\
    Now we can pass to a quotient category
$\widetilde{C}(X)=\widetilde{D}(X)/\widetilde{D}^1(X)$, where all
the previously described maps become invertible, because they
define non-trivial maps between line bundles, so their cones have
support of dimension $1$. So in the quotient category we have:
$a'\circ j^{-1}\circ x\circ i=e'\circ j^{-1}\circ z\circ i$ which
implies $e'^{-1}a'=j^{-1}\circ zx^{-1}\circ j$ and
$e'^{-1}b'=j^{-1}\circ zy^{-1}\circ j$, so this map is conjugate
to the map $(x,y)\mapsto (x^{-1},y^{-1})$, which concludes the
lemma.\end{proof}

\bibliographystyle{plain}
\bibliography{references}

\begin{thebibliography}{1}

\bibitem{AKMW}
D.~Abramovich, K.~Karu, K.~Matsuki, and J.~Wlodarczyk.
\newblock Torification and factorization of birational maps.
\newblock {\em J.Amer. Math. Soc.}, 15(3):531–572, 2002.

\bibitem{Drinfeld04}
Vladimir Drinfeld.
\newblock Dg quotients of dg categories.
\newblock {\em \em Journal of Algebra}, 2004.

\bibitem{Iskovskih85}
Vassily Iskovskih.
\newblock Dokazatel'stvo teoremy o sootnosheniah v dvumernoj gruppe kremony.
\newblock {\em \em Uspehi Matematicheskih nauk}, 1985.

\bibitem{Orlov92}
Dmitry Orlov.
\newblock Projective bundles, monoidal transformations and derived categories
  of coherent sheaves.
\newblock {\em Izv. Akad. Nauk SSSR Ser.Mat.}, 56:852--862, 1992.

\bibitem{Verdier96}
J.L. Verdier.
\newblock {\em Des cat\'{e}gories d\'{e}riv\'{e}es des cat\'{e}gories
  ab\'{e}liennes.}, volume 239.
\newblock Soci\'{e}t\'{e} math\'{e}matique de France, ast\'{e}risque edition,
  1996.

\end{thebibliography}

\end{document}